\title{The divergence of the special linear group over a function ring}
\author{Adrien Le Boudec}
\address{Laboratoire de Mathématiques, bâtiment 425, Université Paris-Sud 11, 91405 Orsay, France}
\email{adrien.le-boudec@math.u-psud.fr}
\date{\today}
\subjclass[2010]{Primary 20F65; Secondary 20F69, 15B33}
\theoremstyle{plain}
\newtheorem{thm}{Theorem}[section]
\newtheorem{prop}[thm]{Proposition}
\newtheorem{cor}[thm]{Corollary}
\newtheorem{lem}[thm]{Lemma}
\theoremstyle{definition}
\newtheorem{defi}[thm]{Definition}
\newtheorem{ex}[thm]{Example}
\newtheorem*{claim1}{Claim 1}
\newtheorem*{claim2}{Claim 2}
\newtheorem*{claim3}{Claim 3}
\newtheorem*{claim4}{Claim 4}
\theoremstyle{remark}
\newtheorem{rmq}[thm]{Remark}
\begin{document}

\maketitle

\begin{abstract}
We compute the divergence of the finitely generated group $\mathrm{SL}_n(\mathcal{O}_{\mathcal{S}})$, where $\mathcal{S}$ is a finite set of valuations of a function field, and $\mathcal{O}_{\mathcal{S}}$ is the corresponding ring of \mbox{$\mathcal{S}$-integer} points. As an application, we deduce that all its asymptotic cones are without cut-points.
\end{abstract}

\setcounter{tocdepth}{1}
\tableofcontents

\section{Introduction}

The motivation of this paper comes from the work of Drutu, Mozes and Sapir. In \cite{DMS} they study the geometric notion of divergence in a metric space, which roughly speaking estimates the cost of going from a point $a$ to another point $b$ while remaining outside a large ball centered at a third point $c$. See Section \ref{secdiv} for precise definitions. They particularly focus on the divergence of lattices in higher rank semisimple Lie groups, and prove the following

\begin{thm}[\cite{DMS}, Theorem 1.4] \label{ThmDMS} \enskip \newline
Let $G$ be a semisimple Lie group of $\mathbb{R}$-rank $\geq 2$, and let $\Gamma$ be an irreducible lattice of $G$ which is either of $\mathbb{Q}$-rank one or of the form $\mathrm{SL}_n(\mathcal{O}_{\mathcal{S}})$ with $n \geq 3$, where $\mathcal{S}$ is a finite set of valuations of a number field containing all the Archimedean ones and  $\mathcal{O}_{\mathcal{S}}$ is corresponding ring of $\mathcal{S}$-integers. Then $\Gamma$ has a linear divergence.
\end{thm}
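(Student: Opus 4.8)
The plan is to deduce the statement from the characterisation of linear divergence in terms of asymptotic cones. For an unbounded geodesic metric space the divergence is always at least linear, and by the general theory of \cite{DMS} it is linear exactly when the space is \emph{wide}, i.e.\ when no asymptotic cone of it has a cut point. Since the divergence function and the collection of asymptotic cones only depend on the quasi-isometry type, and $\Gamma$ is finitely generated, it will be enough to produce a space quasi-isometric to $\Gamma$ all of whose asymptotic cones are without cut points.

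The next step is to replace $\Gamma$ by a model coming from reduction theory. Write $G=\prod_{v\in\mathcal{S}}\mathrm{SL}_n(k_v)$ and let $X=\prod_v X_v$ be the associated product of symmetric spaces and Bruhat--Tits buildings, on which the irreducible \emph{non-uniform} lattice $\Gamma$ acts; note $\mathbb{R}\text{-}\mathrm{rank}(G)\geq 2$. By the quasi-isometry coming from reduction theory --- the Lubotzky--Mozes--Raghunathan theorem that in higher rank the word metric on $\Gamma$ is bi-Lipschitz to the metric induced from $G$, together with the Borel--Serre / Grayson description of a $\Gamma$-equivariant neighbourhood of the cusps --- the group $\Gamma$ is quasi-isometric to the \emph{neutered space} $\Omega\subset X$ obtained from $X$ by deleting a $\Gamma$-invariant, pairwise disjoint, uniformly separated family $\{\mathrm{HB}_j\}$ of open horoballs and taking the induced length metric. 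In the $\mathbb{Q}$-rank one case the $\mathrm{HB}_j$ are the maximal horoballs attached to the finitely many $\Gamma$-orbits of minimal parabolic $\mathbb{Q}$-subgroups; when $\mathbb{Q}\text{-}\mathrm{rank}(\Gamma)$ is larger --- which can happen for $\mathrm{SL}_n(\mathcal{O}_{\mathcal{S}})$, $n\geq 3$ --- one deletes the finer system of horoballs indexed by all proper parabolic $\mathbb{Q}$-subgroups. So it remains to show that every asymptotic cone $\mathcal{C}$ of $\Omega$ has no cut point.

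To analyse $\mathcal{C}$ I would use the Kleiner--Leeb structure theorem: $\mathrm{Cone}_\omega(X)$ is a Euclidean building of rank $r=\mathbb{R}\text{-}\mathrm{rank}(G)\geq 2$, geodesically complete and with any two points contained in a common apartment $\cong\mathbb{R}^{r}$; in particular it has no cut point. In the cone, the deleted horoballs yield a family $\mathcal{H}_\omega$ of pairwise disjoint convex subsets, and $\mathcal{C}$ is identified with the complement of $\mathcal{H}_\omega$ with the induced length metric; as the $\mathrm{HB}_j$ are uniformly separated, this length metric is bi-Lipschitz to the restricted metric. The crucial point is then to show that for any three points $p,q,x$ of $\mathcal{C}$ one can join $p$ to $q$ inside $\mathcal{C}\setminus\{x\}$. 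For this one picks an apartment $A\cong\mathbb{R}^{r}$ of $\mathrm{Cone}_\omega(X)$ through $p$ and $q$; each member of $\mathcal{H}_\omega$ meets $A$ in a convex region in the ``parabolic'' directions, and because $r\geq 2$ the complement in $A$ of this (locally finite) trace together with the single point $x$ remains path-connected --- this uses that the boundary horospheres are unbounded in at least two independent directions, which is where the hypothesis enters: in the $\mathbb{Q}$-rank one case the ambient rank $r\geq 2$ provides the room around the one direction of degeneration, while for $\mathrm{SL}_n$ with $n\geq 3$ the Levi factors and unipotent radicals of the proper parabolics keep the horospheres large. A path in $A\cap\mathcal{C}$ realising such a detour, combined with the bi-Lipschitz comparison, shows $x$ is not a cut point; as $x$ was arbitrary, $\mathcal{C}$ is wide and $\Gamma$ has linear divergence.

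The hard part will be precisely this last claim: controlling the deleted horoballs well enough that neither a single one nor any union of them can separate $\mathcal{C}$, and --- quantitatively --- that routing around them is only linearly expensive, uniformly over the scale and over $p,q,c$. Both hypotheses are genuinely needed here: if the ambient group had a rank-one factor, or for $\mathrm{SL}_2(\mathcal{O}_{\mathcal{S}})$, the horoballs would disconnect the cone and create a tree-graded structure with cut points, hence superlinear divergence. Once the ``horospheres are wide enough to route around at linear cost'' estimate is established by a careful quantitative reading of reduction theory, the rest is formal, being a combination of the characterisation of linear divergence with the Kleiner--Leeb description of $\mathrm{Cone}_\omega(X)$.
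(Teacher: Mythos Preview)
First, a framing remark: the paper does not prove Theorem~\ref{ThmDMS} at all; it is quoted as a result of Dru\c{t}u--Mozes--Sapir \cite{DMS}. What the paper proves is the function-field analogue (Theorem~\ref{thmmain}), and it does so by adapting the \emph{hands-on} method of \cite{DMS} for $\mathrm{SL}_3(\mathbb{Z})$: one works directly in the Cayley graph, uses the Lubotzky--Mozes--Raghunathan estimate to compare word length with $\log\|\gamma\|$, and explicitly constructs short external trajectories by column operations, routing through a solvable subgroup isomorphic to a lamplighter (Lemmas~\ref{tool}, \ref{lem1}, \ref{lamp}). This is also, as the paper says explicitly before Lemma~\ref{lem1}, the strategy \cite{DMS} uses for the $\mathrm{SL}_n(\mathcal{O}_{\mathcal{S}})$ part of Theorem~\ref{ThmDMS}. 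Your proposal takes a genuinely different route, via asymptotic cones and neutered spaces, so let me compare and then point out where it breaks.

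For the $\mathbb{Q}$-rank one clause your outline is essentially correct and is, in fact, the content of \cite[Theorem~5.12]{DMS} (which the present paper itself invokes after Theorem~\ref{thmmain} for $\mathrm{SL}_2(\mathcal{O}_{\mathcal{S}})$ with $|\mathcal{S}|>1$): in $\mathbb{Q}$-rank one the coarse fundamental domain really is the complement of a uniformly separated, pairwise disjoint family of horoballs in a product of symmetric spaces and buildings of total rank $\geq 2$, and the cone argument you sketch goes through.

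The gap is in the $\mathrm{SL}_n(\mathcal{O}_{\mathcal{S}})$, $n\geq 3$, case. There the $\mathbb{Q}$-rank is $n-1\geq 2$, and the ``finer system of horoballs indexed by all proper parabolic $\mathbb{Q}$-subgroups'' that you propose to delete is \emph{not} pairwise disjoint: horoballs attached to parabolics that share a common refinement overlap, and the combinatorics of these overlaps is governed by the spherical Tits building, not by a discrete set. Consequently the thick part is not the complement of a disjoint family of convex bodies, your ``$\mathcal{H}_\omega$ pairwise disjoint'' step fails, and the subsequent apartment-routing argument no longer has the input it needs. This is precisely why \cite{DMS}, and the present paper following it, abandon the cone picture for $\mathrm{SL}_n$ and instead build external trajectories by explicit matrix manipulations inside well-chosen solvable subgroups. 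If you want to salvage the cone approach in higher $\mathbb{Q}$-rank you would need a substantially finer description of the asymptotic cone of the thick part (closer to Leuzinger--Hattori reduction theory) and a replacement for the disjointness hypothesis; none of that is in your sketch.
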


It follows from the work of Stallings \cite{Stallings} that the understanding of the space of ends of the Cayley graph of a finitely generated group yields some significant information about the algebraic structure of the group. Finitely generated groups with no ends are the same as finite groups, and groups with at least two ends are precisely those that split as an HNN-extension or nontrivial amalgam over a finite subgroup. From the geometric viewpoint, being one-ended corresponds to being \textit{connected at infinity}, and the divergence is a quantified version of this connectedness at infinity, estimating how hard it is to connect two given points avoiding a large ball.

The property for a group $\Gamma$ of having a linear divergence is closely related to the existence of cut-points in its asymptotic cones. More precisely, a finitely generated group has a linear divergence if and only if none of its asymptotic cones has cut-points \cite{DMS}. Examples of finitely generated groups with a linear divergence include any direct product of two infinite groups, non-virtually cyclic groups satisfying a law or non-virtually cyclic groups with central elements of infinite order \cite{DS}. It is conjectured in \cite{DMS} that any irreducible lattice in a higher rank semisimple Lie group has a linear divergence. Note that in the case of a cocompact lattice, the conjecture is known to be true because such a lattice is quasi-isometric to the ambient Lie group, so their asymptotic cones (corresponding to the same ultrafilter and scaling constants) are bi-Lipschitz equivalent. Now any asymptotic cone of a semisimple Lie group of $\mathbb{R}$-rank $\geq 2$ is known to have the property that any two points belong to a common flat of dimension $2$ \cite{KL}, and thus does not have cut-points.

In \cite{CDG}, Caprace, Dahmani and Guirardel prove that the divergence of twin building lattices is linear. In particular their result implies that the asymptotic cones of $\mathrm{SL}_n(\mathbb{F}_q[t,t^{-1}])$ do not have cut-points for any $n \geq 2$. Except for this particular case, to the best of our knowledge, nothing has been done before concerning the study of the divergence of arithmetic groups over function fields.

In this paper, we study the divergence of $\mathrm{SL}_n(\mathcal{O}_{\mathcal{S}})$, where $\mathcal{S}$ is a finite non-empty set of pairwise non-equivalent valuations of a global function field,  and $\mathcal{O}_{\mathcal{S}}$ is the corresponding ring of \mbox{$\mathcal{S}$-integers}. By global function field we mean a finite extension of the field $\mathbb{F}_q(t)$ of rational functions with coefficients in the finite field $\mathbb{F}_q$. They are the analogues in positive characteristic of number fields, i.e. finite extensions of the field $\mathbb{Q}$ of rational integers.

\begin{thm} \label{thmmain}
For any $n \geq 3$, the finitely generated group $\mathrm{SL}_n(\mathcal{O}_{\mathcal{S}})$ has a linear divergence.
\end{thm}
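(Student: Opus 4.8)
\medskip
\noindent\textbf{Proof strategy.}
By the characterisation of Drutu, Mozes and Sapir quoted above, linear divergence is equivalent to the absence of cut-points in all asymptotic cones; since an infinite finitely generated group always has at least linear divergence, it suffices to produce, for a fixed word metric, a constant $C$ such that any two elements $a,b$ can be joined by a path of length at most $C\,d(a,b)$ avoiding the ball of radius $\tfrac1C\,d(c,\{a,b\})$ about any third element $c$. Translating on the left, this amounts to joining $e$ to an arbitrary element $g$ while detouring efficiently around $c$ along a well-chosen factorisation of $g$.

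The plan is to build such factorisations from the root subgroups of $\mathrm{SL}_n$. Let $K$ be the global function field, $G=\prod_{v\in\mathcal S}\mathrm{SL}_n(K_v)$, and $Y=\prod_{v\in\mathcal S}X_v$ the product of the associated Bruhat--Tits buildings; each $X_v$ is a thick Euclidean building of rank $n-1\ge 2$, so any two points of $Y$ lie in a common flat of dimension at least $2$ and $Y$ itself has linear divergence. The group $\Gamma=\mathrm{SL}_n(\mathcal O_{\mathcal S})$ is a lattice in $G$ acting properly on $Y$, and since every local factor has rank at least $2$ the results of Lubotzky, Mozes and Raghunathan apply: the orbit map $\Gamma\to Y$ is a quasi-isometric embedding, each root subgroup $U_\alpha\cong(\mathcal O_{\mathcal S},+)$ is undistorted in $\Gamma$, and any element $g$ of word length $n$ factors as a product of a bounded number of root subgroup elements each of word length $O(n)$. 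Interpolating inside each coset $g_0U_\alpha$ produces a path of length $O(n)$ from $e$ to $g$, and whenever such a segment meets the forbidden ball I would reroute it by means of the Steinberg relations, which hold over any commutative ring. For $n\ge 3$ each $U_\alpha$ arises as a set of commutators $[U_\beta,U_\gamma]$ with $\beta+\gamma=\alpha$ (this uses the existence of a third index, hence $n\ge 3$) and commutes with many other root subgroups, so a segment $g_0\to g_0u$ with $u=[v,w]$ can be replaced by the detour $g_0\to g_0v\to g_0vw\to g_0vwv^{-1}\to g_0u$, in which one may take $v$ of bounded length and hence $|v|,|w|=O(n)$. This yields a family of linear-length detours that, inside the ambient flat of $Y$ through $g_0$ and $g_0u$, wind around any ball of the required radius.

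The main difficulty is that $\Gamma$ is not cocompact on $Y$, so a path avoiding a ball can run deep into a cusp, far from the orbit, and the rerouting must be controlled there as well. In characteristic zero Drutu, Mozes and Sapir control the cusps by reduction theory; over a function field the additive groups $(\mathcal O_{\mathcal S},+)$, and with them the unipotent radicals of the parabolic subgroups, are no longer finitely generated, which is the essential new difficulty. I would instead invoke reduction theory for $\mathrm{SL}_n$ over $K$ (Harder, Behr): a coarse fundamental domain for $\Gamma$ on $Y$ is covered by finitely many Siegel sets, and since the $K$-rank $n-1$ is at least $2$, each ``horospherical'' cross-section still contains two commuting root directions. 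From this one shows that the orbit-complement is a finite union of cusp regions with the property that any portion of a detour path entering such a region can be pushed back into the orbit at linear cost without meeting the forbidden ball. Combining the generic estimate of the previous paragraph with this cuspidal analysis yields linear divergence of $\mathrm{SL}_n(\mathcal O_{\mathcal S})$ for $n\ge 3$; I expect the cuspidal step, namely making reduction theory and undistortion interact uniformly in positive characteristic, to be the technical heart.
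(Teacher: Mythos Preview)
Your outline contains a concrete error and a genuine gap. The error: you assert that each root subgroup $U_\alpha\cong(\mathcal O_{\mathcal S},+)$ is undistorted in $\Gamma$. But $(\mathcal O_{\mathcal S},+)$ is not finitely generated (for instance $(\mathbb{F}_q[t],+)$ is an infinite-dimensional $\mathbb{F}_q$-vector space), so ``undistorted'' has no meaning here; what LMR actually gives is that the word length of $e_{ij}(r)$ in $\Gamma$ is comparable to $\log\|r\|$, i.e.\ root subgroups are \emph{exponentially} compressed via $e_{ij}(rs)=[e_{ik}(r),e_{kj}(s)]$. This undercuts your rerouting scheme: in the detour $g_0\to g_0v\to g_0vw\to g_0vwv^{-1}\to g_0u$ with $|v|$ bounded, the first leg moves only a bounded distance, so nothing forces the path to leave a ball of radius comparable to $d(c,\{a,b\})$. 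Showing that \emph{some} choice of commutator decomposition yields a detour that genuinely winds around the forbidden ball is exactly the nontrivial content, and your sketch does not supply it. The gap you yourself flag---pushing paths out of the cusps via Harder--Behr reduction theory at linear cost---is likewise the heart of the matter rather than a lemma one can cite.

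The paper proceeds quite differently and avoids reduction theory altogether. For the base case $\mathcal O_{\mathcal S}=\mathbb{F}_q[t]$ it works by explicit column operations in $\mathrm{SL}_3$: one constructs specific matrices $A_1,A_2\in\mathrm{SL}_2(\mathbb{F}_q[t])$ diagonalisable over $\mathbb{F}_q(\!(t^{-1})\!)$ with eigenvalues of norm $q^{\pm 1}$, so that the subgroups $\mathbb{F}_q[t]^2\rtimes_{A_i}\mathbb Z$ are cocompact in a $\mathrm{SOL}_3$-type group, hence quasi-isometric to a lamplighter group, hence undistorted with linear divergence. A chain of explicit moves, each protecting one ``$\varepsilon$-large'' entry so the path never approaches the identity, carries an arbitrary matrix to a fixed unipotent subgroup, and the final connection takes place inside one of these lamplighter subgroups. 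The passage to general $\mathcal O_{\mathcal S}$ is by induction on $|\mathcal S|$ rather than by cusp analysis: the stabiliser in $\mathrm{SL}_n(\mathcal O_{\mathcal S\cup\{\nu\}})$ of a vertex in the new building factor is commensurable with $\mathrm{SL}_n(\mathcal O_{\mathcal S})$, so one connects the first $|\mathcal S|$ coordinates using the induction hypothesis and then the last coordinate using the base case.
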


Note that Theorem \ref{thmmain} also holds for $n=2$ when $|\mathcal{S}| > 1$ because $\mathrm{SL}_2(\mathcal{O}_{\mathcal{S}})$ quasi-isometrically embeds into a product of $|\mathcal{S}|$ trees, and the image under this embedding can be identified as the complement of disjoint horoballs. Now it follows from Theorem 5.12 of \cite{DMS} that such a space has a linear divergence. On the other hand if $|\mathcal{S}| = 1$ then $\mathrm{SL}_2(\mathcal{O}_{\mathcal{S}})$ is not finitely generated.

Combining Theorem \ref{thmmain} and Proposition \ref{PropDMS} stated in Section \ref{secdiv}, we obtain

\begin{thm} \label{Main}
For any $n \geq 3$, the asymptotic cones of $\mathrm{SL}_n(\mathcal{O}_{\mathcal{S}})$ do not have cut-points.
\end{thm}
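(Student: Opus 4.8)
The statement is a formal consequence of results already at our disposal. By Proposition \ref{PropDMS} --- the Drutu--Mozes--Sapir characterisation --- a finitely generated group has linear divergence if and only if none of its asymptotic cones admits a cut-point, and Theorem \ref{thmmain} asserts exactly that $\mathrm{SL}_n(\mathcal{O}_{\mathcal{S}})$ has linear divergence for $n \geq 3$. Combining the two gives Theorem \ref{Main} at once, so the substantive content lies entirely in Theorem \ref{thmmain}, and I describe the plan for that.

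I would study $\Gamma = \mathrm{SL}_n(\mathcal{O}_{\mathcal{S}})$ through its proper isometric action on $X = \prod_{v \in \mathcal{S}} \Delta_v$, where $\Delta_v$ is the Bruhat--Tits building of $\mathrm{SL}_n(K_v)$ over the completion $K_v$ of the function field at $v$. For $n \geq 3$ each $\Delta_v$ is a locally finite Euclidean building of rank $n-1 \geq 2$, so $X$ is $\mathrm{CAT}(0)$ and carries flats of dimension $\geq 2$ through pairs of points in generic position; in particular the asymptotic cones of $X$ have no cut-points, any two of their points lying in a common flat, as for higher-rank Euclidean buildings (cf. \cite{KL}). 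Since $\mathrm{SL}_n$ is isotropic over the global function field, $\Gamma$ is a non-uniform lattice in $\prod_{v} \mathrm{SL}_n(K_v)$ and its action on $X$ is not cocompact. I would therefore invoke reduction theory over function fields (Harder, Behr, and the work of Bux--Wortman on the large-scale geometry of $S$-arithmetic groups in positive characteristic) to obtain a coarse fundamental set of Siegel type, a description of the cuspidal parts of $\Gamma \backslash X$ as boundedly many (modulo $\Gamma$) neighbourhoods of the rational parabolic subgroups, and the comparison between the word metric of $\Gamma$ and the induced path metric on this model. The key structural input is that the stabiliser of such a cuspidal neighbourhood is, up to commensuration, a horospherical subgroup --- an extension of a unipotent $\mathcal{O}_{\mathcal{S}}$-group by a diagonal part --- and that these horospherical subgroups are exponentially distorted in $\Gamma$; this is the function-field analogue of the distortion exploited in \cite{DMS} for $\mathbb{Q}$-rank-one lattices and in Theorem 5.12 of \cite{DMS}.

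The divergence bound would then be assembled as follows. Given three points $a, b, c$ of $\Gamma$ with $b, c$ at distance $\approx d(b,c)$ from $a$, I want a path from $b$ to $c$ of length $O(d(b,c))$ avoiding the ball about $a$ of radius comparable to $d(b,c)$. In the bulk of $X$ this is the higher-rank mechanism: reroute the geodesic through a $2$-flat, at only a linear cost. When the straight route, or an attempted detour, would enter a cuspidal neighbourhood, I would instead slide the path along the corresponding horosphere at bounded depth, using the exponential distortion of the horospherical subgroup to keep the extra length linear; since the cuspidal neighbourhoods are pairwise disjoint and only boundedly many are met along a geodesic of length $d(b,c)$, patching these local modifications produces a global detour of linear length outside the prescribed ball. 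Equivalently, one may organise this into a proof that $\Gamma$ is \emph{wide}: it is covered by a uniform network of wide subsets --- $2$-flats in the bulk and cosets of horospherical subgroups near the cusps --- in which consecutive members coarsely intersect along unbounded sets.

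The main obstacle, and the reason the function-field case does not follow by transcribing the number-field argument of \cite{DMS}, is that the additive group $(\mathcal{O}_{\mathcal{S}}, +)$ is not finitely generated --- it is an infinite-dimensional $\mathbb{F}_q$-vector space --- so the unipotent root subgroups of $\Gamma$, which are the building blocks of the horospherical subgroups, are infinitely generated and severely distorted. Controlling these subgroups from inside $\Gamma$ with its finite generating set, equivalently controlling depth in the cusps, and making the ``slide along a horosphere'' step quantitative with genuinely linear bounds in this setting, requires redoing the relevant reduction theory and distortion estimates in positive characteristic, where the cusp geometry and the structure of horospherical subgroups differ from the number-field picture. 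I expect essentially all of the effort to concentrate there; the higher-rank detour in the bulk of $X$ should by contrast be a fairly standard application of the geometry of Euclidean buildings.
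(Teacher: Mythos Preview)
Your first paragraph is exactly the paper's argument: Theorem~\ref{Main} is stated immediately after Theorem~\ref{thmmain} with the one-line justification ``Combining Theorem~\ref{thmmain} and Proposition~\ref{PropDMS}\ldots'', and you reproduce that deduction verbatim.

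Where your proposal departs from the paper is in the sketch you give for Theorem~\ref{thmmain} itself. You outline a geometric program via reduction theory over function fields: describe a Siegel-type fundamental domain in the product of buildings, detour through $2$-flats in the thick part, and slide along horospheres in the cuspidal parts using exponential distortion of horospherical subgroups. The paper does something quite different and considerably more hands-on. It first treats the base case $\mathcal{O}_{\mathcal{S}}=\mathbb{F}_q[t]$ by explicit column operations in $\mathrm{SL}_3$: the LMR comparison (Proposition~\ref{propLMR}) reduces everything to degrees of entries, and the key technical device is a concrete $\mathrm{SOL}$-type subgroup $\mathbb{F}_q[t]^2\rtimes_{A_1}\mathbb{Z}$ built from a fixed matrix $A_1\in\mathrm{SL}_2(\mathbb{F}_q[t])$ with eigenvalues of norm $q^{\pm1}$ in $\mathbb{F}_q(\!(t^{-1})\!)$ (Lemma~\ref{diag}, Proposition~\ref{SOL}, Lemma~\ref{tool}). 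A sequence of claims then externally connects an arbitrary matrix to an upper-unipotent one (Lemma~\ref{lem1}), and two such unipotents are connected inside the $\mathrm{SOL}$ subgroup (Lemma~\ref{lamp}). For general $\mathcal{S}$ the paper does \emph{not} invoke reduction theory at all: it inducts on $|\mathcal{S}|$ (Proposition~\ref{induction}), using that the stabilizer in $\mathrm{SL}_n(\mathcal{O}_{\mathcal{S}'})$ of a vertex in the extra building factor is commensurable to $\mathrm{SL}_n(\mathcal{O}_{\mathcal{S}})$, so one can route first in the old coordinates and then in the new one.

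Your approach is more conceptual and closer in spirit to the $\mathbb{Q}$-rank-one argument of \cite{DMS}, and if carried out would presumably generalize beyond $\mathrm{SL}_n$; but as you yourself note, the quantitative control of the cusps in positive characteristic is exactly where the work lies, and you leave that as a program rather than a proof. The paper's route trades generality for concreteness: it sidesteps reduction theory entirely, at the cost of arguments tailored to $\mathrm{SL}_n$ and to the specific ring $\mathbb{F}_q[t]$ in the base case.
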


The analogy between number fields and function fields extends to groups over these fields, but only up to a certain limit. For instance, an important difference between $\mathrm{SL}_n(\mathbb{Z})$ and $\mathrm{SL}_n(\mathbb{F}_q[t])$ is that the latter fails to be virtually torsion-free, i.e. does not admit a finite index subgroup without torsion elements. Finiteness properties may also change with the characteristic. The group $\mathrm{SL}_2(\mathbb{Z})$ is finitely presented whereas $\mathrm{SL}_2(\mathbb{F}_q[t])$ is not even finitely generated \cite{Nagao}. In the same way, $\mathrm{SL}_3(\mathbb{Z})$ is finitely presented, which is not the case of $\mathrm{SL}_3(\mathbb{F}_q[t])$ \cite{Behr}. Since finite presentability can be interpreted in terms of coarse simple connectedness, it means that $\mathrm{SL}_3(\mathbb{Z})$ and $\mathrm{SL}_3(\mathbb{F}_q[t])$ do not behave the same way from the point of view of simple connectedness at infinity. Our result, in contrast, prove that they behave the same way from the point of view of connectedness at infinity.

\medskip

\noindent \textbf{Organization.} The aim of the next section is to introduce preliminary material concerning asymptotic cones, the notion of divergence and the link with asymptotic cut-points. In Section \ref{secmain} we draw the proof of Theorem \ref{thmmain} in the case when $\mathcal{O}_{\mathcal{S}}$ is the polynomial ring $\mathbb{F}_q[t]$, and the proof in the general case is achieved in Section \ref{secadd}, where we also recall basic facts about number theory.

\medskip

\noindent \textbf{Acknowledgments.} I am very grateful to Yves de Cornulier for various helpful comments and suggestions. I would also like to thank Pierre-Emmanuel Caprace for useful discussions. Part of this work was done during Park City Summer Program on Geometric Group Theory, and its organizers are gratefully acknowledged.

\section{Divergence and asymptotic cut-points} \label{secdiv}

In this section we provide preliminary material about asymptotic cones and the geometric notion of divergence. The main result is the equivalence between the linearity of the growth rate of the divergence and the fact that no asymptotic cone admit cut-points. More details can be found in \cite{DMS}.

\subsection{Asymptotic cones}

We now recall the definition of asymptotic cones. We refer the reader to the survey of Drutu \cite{Drutucone} for more details on asymptotic cones.

Let \mbox{$\omega: \mathcal{P}(\mathbb{N}) \rightarrow \left\{0,1\right\}$} be a non-principal ultrafilter, i.e. a finitely additive probability measure on $\mathbb{N}$ taking values in $\left\{0,1\right\}$ and vanishing on singletons. We say that a statement $P(n)$ holds \mbox{$\omega$-almost} surely if the set of $n$ such that $P(n)$ holds has measure $1$. Given a sequence $(a_n)$ in a topological space $A$, we say that $a \in A$ is an $\omega$-limit of $(a_n)$ if for every neighborhood $U$ of $a$, $a_n \in U$ \mbox{$\omega$-almost} surely. We can check that $\omega$-limits always exist if $A$ is compact and are unique provided that $A$ is Hausdorff.

Consider a non-empty metric space $(X,d)$ and an increasing sequence of real numbers $(s_n)$ such that $s_n \geq 1$ and $\lim\limits_{\omega} s_n = + \infty$. We denote by \mbox{$(X,d_n) = (X,s_n^{-1}d)$} the same space on which we divide the distance by $s_n$. The asymptotic cone is a sort of limit space for the sequence of metric spaces $(X,d_n)$. Given any two sequences $x=(x_n),y=(y_n) \in X^{\mathbb{N}}$, the way $\omega$ forces convergence allows us to define the quantity $d_{\omega}(x,y) = \lim\limits_{\omega} d_n(x_n,y_n),$ with the aim of defining a metric on the product $\prod_n (X,d_n)$. In order to avoid infinite limits, let us fix a base point $e \in X$ and consider \[ \mathrm{Precone}(X,(s_n)) = \left\{(x_n) \in X^{\mathbb{N}} \, : \, d_n(x_n,e) \, \text{is bounded} \right\}. \] This does not depend on the choice of $e \in X$. The function $d_{\omega}$ is a pseudo-metric on $\mathrm{Precone}(X,(s_n))$, i.e. $d_{\omega}$ satisfies the triangle inequality, is symmetric and vanishes on the diagonal, but it need not be a metric because the $\omega$-limit of $d_n(x_n,y_n)$ can be zero for two different sequences, for example for two sequences equal \mbox{$\omega$-almost} surely. The asymptotic cone $\mathrm{Cone}_{\omega}(X,(s_n))$ of $(X,d)$ relative to the sequence of scaling constants $(s_n)$, the base point $e \in X$ and the non-principal ultrafilter $\omega$, is defined by identifying elements of $\mathrm{Precone}(X,(s_n))$ at distance zero: \[ \mathrm{Cone}_{\omega}(X,(s_n)) = \mathrm{Precone}(X,(s_n)) / \! \! \sim \, ,\] where $(x_n) \sim (y_n)$ if $d_{\omega}(x,y) = 0$.

In particular, the asymptotic cones of a finitely generated group $\Gamma$ are the asymptotic cones of its Cayley graph endowed with some word metric. Note that in this context, $\mathrm{Precone}(\Gamma,(s_n))$ is endowed with a group structure inherited from $\Gamma$, and acts by isometries on $\mathrm{Cone}_{\omega}(\Gamma,(s_n))$ by $(g_n) \cdot [(h_n)] = [(g_nh_n)]$. This action being transitive, $\mathrm{Cone}_{\omega}(\Gamma,(s_n))$ is a homogeneous metric space.

\subsection{The divergence function}

As usual in the context of studying the large-scale geometry of finitely generated groups, we consider the following equivalence relation on the set of functions measuring asymptotic properties of groups.

\begin{defi}
Let $f,g: \mathbb{R}_+ \rightarrow \mathbb{R}_+$. We write $f \preccurlyeq g$ if there exists $C > 0$ such that for all $x \in \mathbb{R}_+$, \[f(x) \leq C g(C x + C) + C x + C. \] If $f,g: \mathbb{R}_+ \rightarrow \mathbb{R}_+$ satisfy $f \preccurlyeq g$ and $g \preccurlyeq f$, then we write $f \simeq g$, and $f,g$ are said to be $\simeq$-equivalent.
\end{defi}

In \cite{DMS} the authors introduce several definitions of divergence, and prove that they give $\simeq$-equivalent functions. We will focus on the following definition of divergence.

\begin{defi}
Let $(X,d)$ be a geodesic metric space, and let $0 < \delta < 1$ and $\gamma \geq 0$. We define the divergence $\mathrm{div}_\gamma(a,b,c;\delta)$ of a pair of points $a,b \in X$ relative to a point $c \in X$, to be the length of a shortest path in $X$ connecting $a$ and $b$ and avoiding the ball centered at $c$ of radius $\delta d(c,\left\{a,b\right\}) - \gamma$. If there is no such path, put $\mathrm{div}_\gamma(a,b,c;\delta) = \infty$. The divergence $\mathrm{div}_\gamma(a,b;\delta)$ of the pair $(a,b)$ is defined as the supremum of the divergences relative to all points $c \in X$, and the divergence function $\mathrm{Div}_\gamma(n;\delta)$ is defined as the supremum of all divergences of pairs $(a,b)$ with $d(a,b) \leq n$.
\end{defi}

The following proposition is proved in Lemma 3.4 and Lemma 3.11 of \cite{DMS}.

\begin{prop}
Let $X$ be a connected homogeneous locally finite graph with one end (e.g. the Cayley graph of a finitely generated one-ended group). Then there exist $\gamma_0, \delta_0 > 0$ such that for every $\gamma \geq \gamma_0$ and every $\delta \leq \delta_0$, the function $n \mapsto \mathrm{Div}_\gamma(n;\delta)$ takes only finite values, and is independent, up to the equivalence relation $\simeq$, of the parameters $\gamma$ and $\delta$.
\end{prop}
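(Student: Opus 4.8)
The statement has two parts — finiteness of $\mathrm{Div}_{\gamma}(n;\delta)$ and its $\simeq$-independence of the parameters in the prescribed ranges — and both rest on one quantitative consequence of one-endedness that I would establish first. Call it the \emph{escape estimate}: there is a constant $\lambda\geq 1$, depending only on $X$, such that for every vertex $c$ and every $R\geq 0$ the complement $X\setminus B(c,R)$ has exactly one unbounded component, and every vertex at distance $>\lambda R+\lambda$ from $c$ lies in it. To prove this, local finiteness forces $X\setminus B(c,R)$ to have only finitely many components (each one meets the finite sphere $S(c,R+1)$), one-endedness leaves exactly one of them unbounded, and if a bounded component $D$ reaches distance $R+k$ from $c$ then it contains a ball of radius $k-1$ (those vertices stay outside $B(c,R)$), so $|D|$ is large, while the outer vertex boundary of $D$ sits inside $S(c,R)$ and hence has at most $|S(c,R)|$ vertices; feeding this into an isoperimetric inequality valid in homogeneous graphs — using also that a one-ended homogeneous graph has super-linear growth, so its isoperimetric profile tends to infinity and balls are approximately isoperimetric — bounds $k$ by a multiple of $R$. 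Homogeneity then makes $\lambda$ independent of $c$.

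Granting the escape estimate, finiteness goes as follows. Fix $\delta_0<1/(2\lambda)$ and $\gamma_0>2\lambda\delta_0$, and take $\gamma\geq\gamma_0$, $\delta\leq\delta_0$. Given $a,b$ with $d(a,b)\leq n$ and a vertex $c$, put $r=\mathrm{dist}(c,\{a,b\})$ and $\rho=\delta r-\gamma$. If $\rho<0$ the avoided ball is empty and the geodesic $[a,b]$ (length $\leq n$) works; the same holds when $r\geq 2(n+1)$, since then every point of $[a,b]$ is at distance $\geq r-n>\rho$ from $c$. Otherwise $0\leq\rho$ and $r<2(n+1)$, so $a,b,c$ all lie in $B(c,3(n+1))$ and $\rho<2(n+1)$; moreover, from $\delta\leq\delta_0<1/(2\lambda)$ and $r\geq\gamma/\delta\geq\gamma_0/\delta_0>2\lambda$ one checks $d(c,a),d(c,b)\geq r>\lambda\rho+\lambda$, so by the escape estimate $a$ and $b$ lie in the same (unbounded) component of $X\setminus B(c,\rho)$ and can be joined outside it. Finally, translating $c$ to a fixed base point by homogeneity, the pair $(a,b)$ then ranges over a finite set (local finiteness of $B(c,3(n+1))$), so the supremum defining $\mathrm{Div}_{\gamma}(n;\delta)$ over all these triples is a finite maximum.

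For parameter independence, use monotonicity first: increasing $\gamma$ shrinks the avoided ball and increasing $\delta$ enlarges it, so $\mathrm{Div}_{\gamma}(n;\delta)$ is non-increasing in $\gamma$ and non-decreasing in $\delta$. Hence it suffices to prove, for $\gamma_0\leq\gamma_1\leq\gamma_2$ and $\delta_2\leq\delta_1\leq\delta_0$, the non-obvious inequality $\mathrm{Div}_{\gamma_1}(n;\delta_1)\preccurlyeq\mathrm{Div}_{\gamma_2}(n;\delta_2)$ — that avoiding the larger ball $B_1$ is, up to $\simeq$, no harder than avoiding the smaller ball $B_2$. This is the classical ``divergence is well-defined'' manipulation: one takes a shortest path avoiding $B_2$ and replaces its (necessarily shallow) excursions into $B_1$ by detours that run around spheres about $c$, pushed out far enough that each detour is controlled by a value of $\mathrm{Div}_{\gamma_2}(\cdot;\delta_2)$; in the only non-trivial configurations — those with $r=\mathrm{dist}(c,\{a,b\})$ bounded by a linear function of $n$, the rest being handled by the geodesic $[a,b]$ as in the finiteness proof — all radii in play are $O(n)$, each detour has length $\leq\mathrm{Div}_{\gamma_2}(Cn;\delta_2)$, and one needs only $O(n)$ of them, giving $\mathrm{Div}_{\gamma_1}(n;\delta_1)\leq C\,\mathrm{Div}_{\gamma_2}(Cn;\delta_2)+Cn$. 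The book-keeping — subdividing, bounding the number and lengths of the detours, and joining $a$ and $b$ to them while staying outside $B_1$ — is routine, and uses homogeneity to move freely between points at comparable distance from $c$.

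The part I expect to be the real obstacle is the escape estimate, i.e. upgrading the soft fact ``one-endedness $\Rightarrow$ every bounded component of $X\setminus B(c,R)$ is finite'' to the quantitative ``$\Rightarrow$ it protrudes at most $\lambda R$ beyond $B(c,R)$''. Without a \emph{linear} bound of this kind one only gets finiteness of $\mathrm{div}_{\gamma}(a,b,c;\delta)$ for each individual triple, not the uniform finiteness of the supremum over all pairs at bounded distance; securing the linear bound is precisely where one-endedness must be combined with homogeneity (to invoke the isoperimetric inequality and the growth dichotomy) and with local finiteness. Everything after that — the case analysis, the monotonicity, and the detour-chasing — is elementary or routine.
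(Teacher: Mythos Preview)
The paper does not give its own proof of this proposition: it is stated without argument and attributed to Lemma~3.4 and Lemma~3.11 of \cite{DMS}. So there is no in-paper proof to compare your proposal against.

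Your outline is nonetheless in the right spirit, and you have correctly isolated the one genuinely non-formal ingredient: a \emph{linear} escape estimate (vertices at distance $>\lambda R+\lambda$ from $c$ lie in the unbounded component of $X\setminus B(c,R)$). Once that is granted, your finiteness argument (geodesic when $r$ is large, finitely many configurations by homogeneity and local finiteness when $r$ is bounded linearly in $n$) and your monotonicity-plus-detours scheme for parameter independence are the standard manoeuvres and would go through.

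The weak point is the justification of the escape estimate itself. The chain ``$D$ contains a ball of radius $k-1$, $\partial D\subset S(c,R)$, feed into isoperimetry'' is correct as far as it goes, but the phrase ``balls are approximately isoperimetric'' is doing real work that it cannot bear: in non-amenable groups every finite set has boundary comparable to its volume, and in amenable groups of exponential growth F{\o}lner sets need not look like balls at all, so there is no general inequality $I(\beta(r))\asymp\sigma(r)$ to invoke. What one can use is the Coulhon--Saloff-Coste inequality $|\partial A|\geq c\,|A|/\beta^{-1}(2|A|)$ for vertex-transitive graphs, combined with $|D|\geq\beta(k-1)$, $|\partial D|\leq\sigma(R)$, and the crude bound $|D|\leq\beta(R+k)$; unwinding this across the various growth regimes does yield $k=O(R)$, but it is a genuine computation that your sketch does not supply. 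You are right that securing this linear bound is exactly where homogeneity (to get Coulhon--Saloff-Coste) and one-endedness (to force superlinear growth, hence $I(n)\to\infty$) must be combined, but the sentence as written is a promissory note rather than an argument.
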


As usual when studying the large-scale geometry of metric spaces, we want to work with objects that behave well with respect to quasi-isometries. Recall that a map $f: (X,d_X) \rightarrow (Y,d_Y)$ is a quasi-isometric embedding if for some constants $L \geq 1$ and $C \geq 0$, \[L^{-1}d_X(x_1,x_2) - C \leq d_Y(f(x_1),f(x_2)) \leq L d_X(x_1,x_2) + C \] for all $x_1,x_2 \in X$. If moreover $Y$ is contained in the \mbox{$C$-neighborhood} of $f(X)$ then $f$ is called a quasi-isometry, and $X,Y$ are said to be quasi-isometric. The following result will be used without further mention, see Lemma 3.2 in \cite{DMS}.

\begin{prop}
Let $X$ and $Y$ be connected homogeneous locally finite graphs with one end. If $X$ and $Y$ are quasi-isometric then they have $\simeq$-equivalent divergence functions. 
\end{prop}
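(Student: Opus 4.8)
The plan is to prove both inequalities $\mathrm{Div}^X\preccurlyeq\mathrm{Div}^Y$ and $\mathrm{Div}^Y\preccurlyeq\mathrm{Div}^X$; they are symmetric, so I only treat the first. Fix a quasi-isometry $f\colon X\to Y$ with constants $(L,C)$, together with a quasi-inverse $g\colon Y\to X$, which is again a quasi-isometry, say with constants $(L',C')$, and which satisfies $d_X(g(f(x)),x)\le C_0$ and $d_Y(f(g(y)),y)\le C_0$ for all $x,y$ and some constant $C_0$; since $X$ and $Y$ are graphs I take $f$ and $g$ to be defined on vertices and to send vertices to vertices. By the two preceding propositions, each divergence function is finite and, up to $\simeq$, independent of the admissible parameters $\gamma,\delta$, so it suffices to bound $\mathrm{Div}^X_\gamma(n;\delta)$ by $\mathrm{Div}^Y_{\gamma'}(\,\cdot\,;\delta')$ for one convenient choice of $(\gamma,\delta)$ depending on $(\gamma',\delta')$ and the constants above.

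Fix $a,b,c\in X$ with $d_X(a,b)\le n$. If $\delta\,d_X(c,\{a,b\})-\gamma\le 0$ then a geodesic from $a$ to $b$ is an admissible path of length $\le n$, so assume this quantity is positive. Pick an edge-path $q$ in $Y$ from $f(a)$ to $f(b)$, through vertices $f(a)=y_0,y_1,\dots,y_k=f(b)$, avoiding the ball of radius $\delta'\,d_Y(f(c),\{f(a),f(b)\})-\gamma'$ around $f(c)$ and of length (which equals $k$) at most $\mathrm{div}_{\gamma'}(f(a),f(b),f(c);\delta')+1\le\mathrm{Div}^Y_{\gamma'}(Ln+C;\delta')+1$; here we use that $Y$ is a locally finite graph and that $d_Y(f(a),f(b))\le Ln+C$. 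Consecutive vertices $y_i,y_{i+1}$ are at distance $1$, hence $d_X(g(y_i),g(y_{i+1}))\le L'+C'=:R$, so joining the $g(y_i)$ successively by geodesics in $X$ produces a path from $g(f(a))$ to $g(f(b))$ of length $\le Rk$. Prepending a geodesic from $a$ to $g(f(a))$ and appending one from $g(f(b))$ to $b$, each of length $\le C_0$, gives a path $p$ from $a$ to $b$ with $\operatorname{length}(p)\preccurlyeq\mathrm{Div}^Y_{\gamma'}(Ln+C;\delta')$.

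It remains to check that $p$ avoids a ball around $c$ of the required form. Every point of $p$ lies within a constant $R_0$ (depending only on $R$ and $C_0$) of some $g(y_i)$. For such a point $x$, applying in turn the quasi-isometry inequality for $g$, the bound $d_X(g(f(c)),c)\le C_0$, the fact that $q$ avoids its ball, and the quasi-isometry inequality for $f$ yields
\[
d_X(x,c)\ \ge\ L'^{-1}\bigl(\delta'\,d_Y(f(c),\{f(a),f(b)\})-\gamma'\bigr)-(C'+C_0+R_0);
\]
since $d_Y(f(c),\{f(a),f(b)\})\ge L^{-1}d_X(c,\{a,b\})-C$, this gives $d_X(x,c)\ge\delta\,d_X(c,\{a,b\})-\gamma$ with $\delta:=(LL')^{-1}\delta'$ and $\gamma$ an explicit constant depending only on $L,L',C,C',C_0,R_0,\gamma',\delta'$. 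Choosing $\delta'$ small and $\gamma'$ large, and enlarging $\gamma$ if necessary, so that both $(\gamma',\delta')$ and the resulting $(\gamma,\delta)$ lie in the admissible ranges of the previous proposition, we conclude that $p$ avoids the ball $B(c,\delta\,d_X(c,\{a,b\})-\gamma)$, hence $\mathrm{div}_\gamma(a,b,c;\delta)\le\operatorname{length}(p)\preccurlyeq\mathrm{Div}^Y_{\gamma'}(Ln+C;\delta')$. Taking the supremum over $c$ and over all pairs $a,b$ with $d_X(a,b)\le n$, and invoking the parameter-independence once more, yields $\mathrm{Div}^X\preccurlyeq\mathrm{Div}^Y$, which completes the argument.

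I expect the only delicate point to be the estimate in the third paragraph: one has to make sure that all the additive errors accumulated along the way — the quasi-isometry constants $C,C',C_0$, the mesh $R$ of the pulled-back path, and the slack $R_0$ — are absorbed into the single additive constant $\gamma$, while the only surviving multiplicative distortion $(LL')^{-1}$ is compensated by replacing $\delta'$ with a proportionally smaller $\delta$; together with the need to isolate the degenerate regime in which $d_X(c,\{a,b\})$ is so small that the relevant ball is empty, handled by the trivial bound $d_X(a,b)\le n$ and harmless thanks to the linear term in $\preccurlyeq$.
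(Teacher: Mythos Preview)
The paper does not actually prove this proposition; it states it and refers to Lemma~3.2 of \cite{DMS}. Your proposal supplies a self-contained argument which is correct and follows the expected route: push $a,b,c$ through $f$, realise the $Y$-divergence by a short avoiding path, pull this path back through the quasi-inverse $g$, connect the discrete images by short geodesics, and absorb all the additive errors into the parameter $\gamma$ while the multiplicative distortion $(LL')^{-1}$ is absorbed into $\delta$. This is precisely the standard proof one finds in \cite{DMS}, so there is nothing substantive to compare.

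Two small points worth tightening. First, you restrict $f,g$ to vertices, but $a,b,c$ are arbitrary points of the graph; you should either extend $f,g$ to all of $X,Y$ by nearest-vertex rounding (at the cost of a further additive constant), or observe at the outset that it suffices to treat vertices since every point lies within distance $1/2$ of one. Second, the finiteness of $\mathrm{Div}^Y_{\gamma'}(\,\cdot\,;\delta')$ is needed to guarantee that the avoiding path $q$ exists; you do invoke the preceding proposition for this, but note that that proposition applies to $Y$ because the hypotheses (connected, homogeneous, locally finite, one-ended) are symmetric in $X$ and $Y$. Neither point affects the validity of the argument.
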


Let $X$ be an infinite connected homogeneous graph. As we claimed in the introduction, the topological property for asymptotic cones of having cut-points is closely related to the geometric property for $X$ of having a linear divergence, i.e. a divergence function $\simeq n$. Recall that in a geodesic metric space $E$, a cut-point is a point $p \in E$ such that $E \setminus \left\{p\right\}$ has more than one path-connected component. For a proof of the following result, see Lemma 3.17 in \cite{DMS}.

\begin{prop} \label{PropDMS}
All the asymptotic cones of $X$ have no cut-points if and only if there exist $\gamma, \delta$ such that the function $\mathrm{Div}_\gamma(n,\delta)$ is linear.
\end{prop}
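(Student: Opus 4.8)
\emph{The plan.} By the proposition just stated, once admissible parameters $\gamma \geq \gamma_0$ and $\delta \leq \delta_0$ are fixed the function $\mathrm{Div}_\gamma(\cdot;\delta)$ is finite-valued with a $\simeq$-class independent of $(\gamma,\delta)$; so I would first reduce ``there exist $\gamma,\delta$ making $\mathrm{Div}_\gamma(n;\delta)$ linear'' to ``$\mathrm{Div}_\gamma(n;\delta) \simeq n$'' for one, hence every, such pair, and then prove the equivalence for a fixed admissible pair. The whole proof rests on a single routine observation, used in both directions: a sequence of paths $p_k$ in $(X,d)$ with $\mathrm{length}(p_k) \leq L s_k$, reparametrised proportionally to arc length on $[0,1]$, is uniformly $L$-Lipschitz as a sequence of maps into $(X,s_k^{-1}d)$, so its $\omega$-limit is an $L$-Lipschitz path in $\mathrm{Cone}_\omega(X,(s_k))$; and conversely every rectifiable path of length $\ell$ in the cone is the $\omega$-limit of such a sequence of paths in $X$ of length $\leq (\ell + \varepsilon) s_k$.

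\emph{From linear divergence to no cut-points.} Suppose $\mathrm{Div}_\gamma(n;\delta) \leq An + A$ and, for contradiction, that some $C = \mathrm{Cone}_\omega(X,(s_n))$ has a cut-point. Since $X$, hence $C$, is homogeneous, every point of $C$ is a cut-point, so I may take the cut-point to be the base point $o = [(o_n)]$ and choose $a = [(a_n)]$, $b = [(b_n)]$ in distinct path-components of $C \setminus \{o\}$; set $\rho = d_\omega(o,\{a,b\}) > 0$. For $\omega$-almost every $n$ one has $d(a_n,b_n) \leq 2\, d_\omega(a,b)\, s_n$ and $d(o_n,\{a_n,b_n\}) \geq \tfrac{\rho}{2} s_n$, so the hypothesis yields a path from $a_n$ to $b_n$ of length $\leq 2A\, d_\omega(a,b)\, s_n + A$ avoiding $B\bigl(o_n, \tfrac{\delta\rho}{2} s_n - \gamma\bigr)$. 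The $\omega$-limit of the rescaled paths is a path in $C$ from $a$ to $b$ on which $d_\omega(\cdot,o) \geq \tfrac{\delta\rho}{2} > 0$, contradicting that $o$ separates $a$ from $b$.

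\emph{From no cut-points to linear divergence: the construction.} Here I would argue by contraposition: assuming $\mathrm{Div}_\gamma(\cdot;\delta)$ is not $\preccurlyeq n$, produce a cone with a cut-point. As this function is non-decreasing and finite-valued, pick $n_k \to \infty$ with $\mathrm{Div}_\gamma(n_k;\delta)/n_k \to \infty$ and triples $a_k,b_k,c_k$ with $d(a_k,b_k) \leq n_k$ and $\mathrm{div}_\gamma(a_k,b_k,c_k;\delta) \geq \mathrm{Div}_\gamma(n_k;\delta) - 2$; then $d(a_k,b_k) \to \infty$ (otherwise these divergences would be bounded by the finite divergence of uniformly bounded pairs) and $\mathrm{div}_\gamma(a_k,b_k,c_k;\delta)/d(a_k,b_k) \to \infty$. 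Since a geodesic $[a_k,b_k]$ avoiding the forbidden ball would realise the divergence, it does not, which forces $d(c_k,\{a_k,b_k\}) \leq d(a_k,b_k)/(1-\delta)$; moving $c_k$ to a nearest point of $[a_k,b_k]$ changes the data only within the $\simeq$-tolerance (at worst enlarging $\delta$ by a bounded admissible factor), so I may assume $c_k \in [a_k,b_k]$ with $D_k := d(a_k,c_k) \leq d(c_k,b_k)$. Next I would take the scaling $s_k$ commensurate with $D_k$: if $D_k \asymp d(a_k,b_k)$, set $s_k = d(a_k,b_k)$; if $D_k = o\bigl(d(a_k,b_k)\bigr)$, set $s_k = D_k$ and replace $b_k$ by the point $b_k'$ of $[c_k,b_k]$ at distance $2D_k$ from $c_k$, noting that extending avoidant paths along $[b_k',b_k]$ keeps the divergence of the new triple $\gg s_k$. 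Forming $C = \mathrm{Cone}_\omega(X,(s_k))$ with base point sequence $(c_k)$ and limit points $a, b, c$, one finds that $c$ lies on a geodesic $[a,b]$ of $C$ with $a \neq c \neq b$ and $\rho := d_\omega(a,c) > 0$, and --- by the converse $\omega$-limit statement, again using such extensions --- that no rectifiable path in $C$ joins $a$ to $b$ while staying at distance $\geq \delta\rho$ from $c$. Equivalently, the closed ball $\overline{B}(c,\delta\rho)$, which has positive radius and contains neither $a$ nor $b$, separates $a$ from $b$ in $C$.

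\emph{The main obstacle.} The hard part is then to pass from a separating closed ball of positive radius to an honest cut-point. This is false in a general metric space --- for instance $T \times [0,1]$, with $T$ a branching $\mathbb{R}$-tree, has a ball separating two points but no cut-point --- so it must exploit that asymptotic cones of finitely generated groups are special. The relevant tool is the theory of tree-graded spaces of Drutu and Sapir \cite{DS}: such a cone has a cut-point precisely when it is tree-graded over a non-trivial family of pieces, and I would argue that the separating ball, together with the homogeneity of $C$ and the fact that asymptotic cones of $C$ are themselves asymptotic cones of $X$, forces such a decomposition --- hence a cut-point, in $C$ or in a cone derived from it. I expect this to be the only genuinely delicate step; the auxiliary point of checking that the normalisation does not let the limiting ball degenerate to a point is minor, and the two $\omega$-limit arguments are routine.
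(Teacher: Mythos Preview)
The paper does not prove this proposition: immediately before the statement it simply refers the reader to Lemma~3.17 of \cite{DMS}. So there is no in-paper argument to compare your proposal against; what you have written is an attempted substitute for the cited proof.

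Your forward direction (linear divergence $\Rightarrow$ no cut-points) is fine and standard: the $\omega$-limit of uniformly rescaled-Lipschitz avoidant paths is an avoidant path in the cone. The reverse direction, however, has exactly the gap you yourself flag as ``the main obstacle,'' and you do not close it. From superlinear divergence your construction only yields a cone $C$ together with points $a,b$ and a closed ball $\overline{B}(c,\delta\rho)$ of \emph{positive} radius separating them; you then assert that homogeneity plus the Dru\c{t}u--Sapir tree-graded machinery ``forces'' a cut-point, but you give no argument. Your own counterexample $T\times[0,1]$ shows this passage is not formal, and invoking \cite{DS} in the form ``a cone has a cut-point iff it is nontrivially tree-graded'' is circular here: what you would need is precisely a criterion that produces a tree-graded structure from a separating ball, and you do not supply one. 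Saying the cut-point might appear ``in $C$ or in a cone derived from it'' also weakens the conclusion beyond what the proposition claims (it asserts that \emph{some} asymptotic cone of $X$ has a cut-point, which is what you need, but you have not shown even that).

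The actual argument in \cite{DMS} does not proceed via a separating ball followed by tree-graded theory; it is organised so that the limiting point $c$ is a genuine cut-point directly, using a refined analysis of how paths in the cone can be approximated by paths in $X$ (and the notion of divergence relative to a point together with the uniformity coming from homogeneity). If you want to repair your route, you would need to show: any path in $C$ from $a$ to $b$ that avoids $c$ must, by compactness, avoid some ball $B(c,\varepsilon)$, and then argue that such a cone path can be pulled back to paths in $X$ of length $O(s_k)$ avoiding $B(c_k,\varepsilon' s_k)$ for some fixed $\varepsilon'>0$ --- this lifting step (discretising the cone path and connecting consecutive sample points in $X$ while controlling where the connecting segments go) is the real content, and it is missing from your sketch.
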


\section{The divergence of $ \mathrm{SL}_n(\mathbb{F}_q[t])$} \label{secmain}

In this section we prove that the finitely generated group $\mathrm{SL}_n(\mathbb{F}_q[t])$ has a linear divergence, explicitly constructing a short path joining any two given large matrices and avoiding a large ball around the origin. The Cayley graph of $\mathrm{SL}_n(\mathbb{F}_q[t])$ associated to a finite generating set $S$ is the graph with $\mathrm{SL}_n(\mathbb{F}_q[t])$ as set of vertices, in which $(\gamma_1,\gamma_2)$ is an edge if and only if there exists $s \in S$ such that $\gamma_2=\gamma_1 s$. It follows that moving in the Cayley graph from a vertex to another corresponds to making column operations in terms of matrices. We will write down the proof only for $n=3$ but our method applies directly to the general case.

\subsection{Preliminary material}

Throughout this section, we let $\Bbbk$ be the the field $\mathbb{F}_q(t)$ and $\Bbbk_{\infty}$ be the field of Laurent series $\mathbb{F}_q(\!(t^{-1})\!)$. Recall that $\Bbbk_{\infty}$ is the completion of $\Bbbk$ with respect to the valuation $\nu_{\infty}$, see Example \ref{exval}. Let us denote by $|\cdot|$ the associated norm. Note that for any polynomial $a \in \mathbb{F}_q[t]$, we have $|a|=q^{\deg a}$. If $\gamma$ is a matrix with entries in $\mathbb{F}_q[t]$, we denote by \[ \left\| \gamma \right\| = \max_{i,j} \left\{ | \gamma_{i,j} | \right\}\] its norm as a matrix over $\Bbbk_{\infty}$.

We now recall some basic results coming from the theory of matrices with elements in a euclidean ring (see Theorem 22.4 of \cite{McD} for example). We let $e_{i,j}(r)$ be the elementary unipotent matrix whose $(i,j)$-entry is $r$, $i \neq j$.

\begin{thm} \label{MCD}
Let $A$ be a euclidean ring and $n \geq 2$. Then any element of $\mathrm{SL}_n(A)$ is a product of elementary matrices. \qed
\end{thm}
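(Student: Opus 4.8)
The plan is to prove the equivalent statement that every $M \in \mathrm{SL}_n(A)$ can be reduced to the identity matrix by a finite sequence of elementary row and column operations, that is, by multiplying $M$ on the left and on the right by matrices of the form $e_{i,j}(r)$. Since $e_{i,j}(r)^{-1} = e_{i,j}(-r)$, this is precisely the assertion that $M$ is a product of elementary matrices. I would argue by induction on $n$, the case $n=1$ being trivial as $\mathrm{SL}_1(A) = \{1\}$ (here the empty product is understood, which is why the statement is phrased for $n \geq 2$).

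For the inductive step, fix $M \in \mathrm{SL}_n(A)$ with $n \geq 2$ and let $\varphi$ denote the euclidean function on $A$. The first column $(a_1, \dots, a_n)^{\mathsf{T}}$ of $M$ is nonzero, and as long as it has at least two nonzero entries I pick one of them, say $a_j$, with $\varphi(a_j)$ minimal among the nonzero entries; for every other nonzero $a_i$ I write $a_i = q\, a_j + r$ with $r = 0$ or $\varphi(r) < \varphi(a_j)$ and perform the row operation replacing row $i$ by (row $i$) $- q\cdot$(row $j$). After one such round either the first column has a single nonzero entry, or the minimal $\varphi$-value among its nonzero entries has strictly decreased; since $\varphi$ takes values in a well-ordered set, after finitely many rounds the first column has exactly one nonzero entry. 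Moving it to the $(1,1)$ position costs at most one further operation — a transposition up to a sign, realized by the elementary product $w = e_{1,2}(-1)\, e_{2,1}(1)\, e_{1,2}(-1) = \left(\begin{smallmatrix} 0 & -1 \\ 1 & 0 \end{smallmatrix}\right)$ placed in the relevant $2\times 2$ block — so the first column becomes $(d, 0, \dots, 0)^{\mathsf{T}}$. All these operations preserve the determinant, so expanding the reduced matrix along its first column gives $d \cdot c = \det M = 1$ for a suitable cofactor $c \in A$; hence $d \in A^{\times}$.

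I then use the unit $d$ in the corner to clear the rest of the first row: for each $j \geq 2$, subtract $d^{-1} m_{1j}$ times column $1$ from column $j$. The matrix now has block form $\left(\begin{smallmatrix} d & 0 \\ 0 & N \end{smallmatrix}\right)$ with $N \in \mathrm{GL}_{n-1}(A)$ and $\det N = d^{-1}$. Since $\mathrm{diag}(d, 1, \dots, 1)\, N \in \mathrm{SL}_{n-1}(A)$, the induction hypothesis lets me reduce $N$ to $\mathrm{diag}(d^{-1}, 1, \dots, 1)$ by elementary row and column operations acting on the last $n-1$ coordinates only, so $M$ is now reduced to $\mathrm{diag}(d, d^{-1}, 1, \dots, 1)$. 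It remains to observe the Whitehead-type identity $\mathrm{diag}(d, d^{-1}) = e_{1,2}(d)\, e_{2,1}(-d^{-1})\, e_{1,2}(d)\, w$, which shows that the surviving $2\times 2$ block $\mathrm{diag}(d, d^{-1})$ is a product of elementary matrices; applying it to the top-left block completes the reduction of $M$ to $I_n$.

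The only genuinely delicate point is the termination of the first-column reduction: one must record that each euclidean-division round strictly decreases a well-founded quantity (the minimal $\varphi$-value of the nonzero entries of the column; for the ring $A = \mathbb{F}_q[t]$ relevant to this paper one may equally take the sum of the degrees, which is the explicit version), and one must be slightly careful that the auxiliary column/row permutations are realized inside the group generated by the $e_{i,j}(r)$ rather than merely in $\mathrm{GL}_n(A)$, which is exactly what the factorization of $w$ above provides. Everything else is routine bookkeeping, and for $A = \mathbb{F}_q[t]$ every step is completely explicit in terms of the degree function.
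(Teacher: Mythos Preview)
The paper does not prove this theorem: it is stated with a terminal \qed\ and a citation to Theorem~22.4 of \cite{McD}, so there is no in-paper argument to compare against. Your proof is correct and is essentially the standard Gaussian-elimination argument one finds in such references; the termination of the column reduction, the realization of signed transpositions as products of $e_{i,j}(r)$'s, and the Whitehead identity for $\mathrm{diag}(d,d^{-1})$ are all handled properly.
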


\begin{cor} \label{corMCD}
For any $n \geq 3$, the group $\mathrm{SL}_n(\mathbb{F}_q[t])$ is generated by the finite set \[ S_0 = \bigcup_{i\neq j} \left\{ e_{i,j}(\alpha) \, : \, \alpha \in \mathbb{F}_q^* \right\} \cup \left\{ e_{i,j}(t) \right\}. \]
\end{cor}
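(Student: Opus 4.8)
The plan is to reduce to Theorem \ref{MCD} and then rewrite an arbitrary elementary matrix as a word in $S_0$. Since $\mathbb{F}_q[t]$ is a euclidean ring, Theorem \ref{MCD} shows that $\mathrm{SL}_n(\mathbb{F}_q[t])$ is generated by the (infinite) set of all $e_{i,j}(r)$ with $i \neq j$ and $r \in \mathbb{F}_q[t]$. Hence it suffices to prove that the subgroup $H := \langle S_0 \rangle$ contains every such $e_{i,j}(r)$.

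First I would exploit the additivity relation $e_{i,j}(r)\,e_{i,j}(r') = e_{i,j}(r+r')$. Writing $r = \sum_{k=0}^{d} a_k t^k$ with $a_k \in \mathbb{F}_q$, this reduces the problem to showing that $e_{i,j}(a t^k) \in H$ for every $a \in \mathbb{F}_q^*$ and every integer $k \geq 0$ (the terms with $a_k = 0$ contribute the identity and may be ignored).

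Next I would argue by induction on $k$. The case $k = 0$ is immediate since $e_{i,j}(a) \in S_0 \subseteq H$. For the inductive step — and this is the only place the hypothesis $n \geq 3$ is used — choose an index $\ell \notin \{i,j\}$, which exists precisely because $n \geq 3$. The Steinberg commutator relation yields
\[ \left[\, e_{i,\ell}(a t^{k-1}),\; e_{\ell,j}(t)\, \right] = e_{i,j}(a t^{k}), \]
a one-line matrix computation resting on the identity $E_{i\ell}E_{\ell j} = E_{ij}$ for the matrix units together with the vanishing of all the other products of matrix units that occur (guaranteed by $i,j,\ell$ being pairwise distinct). By the induction hypothesis $e_{i,\ell}(a t^{k-1}) \in H$, and $e_{\ell,j}(t) \in S_0 \subseteq H$; as $H$ is a subgroup it contains their commutator, so $e_{i,j}(a t^{k}) \in H$. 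This closes the induction, and therefore $H = \mathrm{SL}_n(\mathbb{F}_q[t])$.

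I do not expect a genuine obstacle: the argument is purely formal once Theorem \ref{MCD} and the commutator relations are in hand, and the content of the condition $n \geq 3$ is exactly that it provides the auxiliary index $\ell$ allowing one to realize multiplication by $t$ as a commutator. The only point requiring mild care is bookkeeping the indices in the commutator identity so that the spurious matrix-unit products indeed drop out.
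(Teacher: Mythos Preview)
Your proof is correct and follows essentially the same approach as the paper: reduce via Theorem~\ref{MCD} to elementary matrices, use the additivity $e_{i,j}(x+y)=e_{i,j}(x)e_{i,j}(y)$ to reduce to monomials, and then use the commutator identity $[e_{i,\ell}(x),e_{\ell,j}(y)]=e_{i,j}(xy)$ (valid since $n\geq 3$ furnishes the extra index $\ell$) to run an induction on the degree. The paper phrases the last step slightly more abstractly---``$\mathbb{F}_q^*$ and $t$ generate $\mathbb{F}_q[t]$ as a ring''---but the content is identical.
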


\begin{proof}
According to Theorem \ref{MCD} it is enough to prove that for any $P \in \mathbb{F}_q[t]$, the matrix $e_{i,j}(P)$ is a product of elements of $S_0$. Since $\mathbb{F}_q^*$ and $t$ generate $\mathbb{F}_q[t]$ as a ring, this follows, using a straightforward induction, from the identity \[ e_{i,j}(x+y)=e_{i,j}(x)e_{i,j}(y) \] and the commutator relation \[ e_{i,j}(xy)=[e_{i,k}(x),e_{k,j}(y)] \] for all $x,y \in \mathbb{F}_q[t]$ and $k \neq i,j$ (using that $n\ge 3$).
\end{proof}

Since all the Cayley graphs of $\mathrm{SL}_3(\mathbb{F}_q[t])$ are quasi-isometric, we can choose a particular generating set $S$. Its construction goes as follows.

Let us still denote by $S_0$ the previous finite generating set. We will add some elements to $S_0$ in order to get a more convenient one.

Denote by $S_1$ the set of monomial matrices whose non-zero entries are $\pm 1$ (by a monomial matrix we mean a matrix with exactly one non-zero element in each row and each column).

Consider a matrix $A_1 \in \mathrm{SL}_2(\mathbb{F}_q[t])$ with two eigenvalues $\lambda_+, \lambda_- \in \Bbbk_{\infty}$ such that $\left|\lambda_+\right| > 1$ and $\left| \lambda_- \right| < 1$, and another matrix $A_2$ with the same eigenvalues but with different eigen-directions. For example the matrices \[ A_1 = \begin{pmatrix} 1 & t \\ 1 & t+1 \end{pmatrix} \] and its conjugate \[ A_2 = \begin{pmatrix} 1 & t \\ 0 & 1 \end{pmatrix} \begin{pmatrix} 1 & t \\ 1 & t+1 \end{pmatrix} \begin{pmatrix} 1 & t \\ 0 & 1 \end{pmatrix}^{-1} = \begin{pmatrix} t+1 & t \\ 1 & 1 \end{pmatrix}\] would work. To complete our generating set, put \[ S_2 = \bigcup_{i=1}^2 \left\{ \begin{pmatrix}
 A_i & \begin{array}{l} 0 \\ 0 \end{array} \\
 \begin{array}{cc} 0 & 0 \end{array} & 1
\end{pmatrix},
\begin{pmatrix}
1 & \begin{array}{ll} 0 & 0 \end{array} \\
\begin{array}{l} 0 \\ 0 \end{array} &  A_i
\end{pmatrix}
\right\},
\]
and take $S = S_0 \cup S_1 \cup S_2$.

We now set some notation and terminology, extensively borrowed from \cite{DMS}.

\begin{defi}
Let $0 < \varepsilon < 1$. An entry $\gamma_{i,j}$ of $\gamma \in \mathrm{SL}_3(\mathbb{F}_q[t])$ is called $\varepsilon$-large if we have \[ \log \left| \gamma_{i,j} \right| \geq \varepsilon \log \left\| \gamma \right\|. \]
\end{defi}

Note that any entry $\gamma_{i,j}$ of $\gamma \in \mathrm{SL}_3(\mathbb{F}_q[t])$ such that $\left| \gamma_{i,j} \right| = \left\| \gamma \right\|$ is $\varepsilon$-large, but these may not be the only $\varepsilon$-large entries.

\begin{defi}
Let $\delta_1, \delta_2, \delta_3 > 0$. A $(\delta_1, \delta_2, \delta_3)$-external trajectory connecting two elements $\gamma_1, \gamma_2$ of $\mathrm{SL}_3(\mathbb{F}_q[t])$ is a path $\eta$ in its Cayley graph from $\gamma_1$ to $\gamma_2$ such that:
\begin{enumerate}
	\item [i)] $\eta$ remains outside the ball centered at $e$ of radius \mbox{$\delta_1 \times d_{S}(e,\left\{\gamma_1, \gamma_2\right\}) - \delta_2 $};
	\item [ii)] the length of $\eta$ is bounded by $\delta_3 \times d_{S}(\gamma_1,\gamma_2)$.
\end{enumerate}
Two elements $\gamma_1$ and $\gamma_2$ are said to be $(\delta_1, \delta_2, \delta_3)$-externally connected if there exists a $(\delta_1, \delta_2, \delta_3)$-external trajectory between them, and uniformly externally connected if there exist some constants $\delta_1, \delta_2, \delta_3 > 0$ which do not depend on $\gamma_1$ and $\gamma_2$, such that $\gamma_1$ and $\gamma_2$ are $(\delta_1, \delta_2, \delta_3)$-externally connected.
\end{defi}

Clearly $\mathrm{SL}_3(\mathbb{F}_q[t])$ has a linear divergence if and only if any two elements are uniformly externally connected.

In the next section, when constructing paths in the Cayley graph of $\mathrm{SL}_3(\mathbb{F}_q[t])$, we will substantially make use of the work of Lubotzky, Mozes and Raghunathan. In \cite{LMR} they prove that any irreducible lattice in a semisimple Lie group of $\mathbb{R}$-rank $\geq 2$, endowed with some word metric associated to a finite generating set, is quasi-isometrically embedded in the ambient Lie group. Their result allows us to estimate the size of a matrix $\gamma \in \mathrm{SL}_3(\mathbb{F}_q[t])$ with respect to the word metric $d_S$ in terms of its norm $\left\| \gamma \right\|$.

\begin{prop} \label{propLMR}
There exists $C > 0$ such that for any $\gamma \in \mathrm{SL}_3(\mathbb{F}_q[t])$, \begin{equation} \label{eq1} C^{-1}(1 + \log \left\| \gamma \right\|) \leq d_{S}(e, \gamma) \leq C(1 + \log \left\| \gamma \right\|). \end{equation}
\end{prop}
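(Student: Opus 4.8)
The plan is to deduce this from the Lubotzky--Mozes--Raghunathan theorem, which asserts that an irreducible lattice $\Gamma$ in a semisimple group $G$ of rank $\geq 2$, equipped with a word metric, is quasi-isometrically embedded in $G$ equipped with a left-invariant Riemannian (or word) metric. Here the relevant arithmetic group is $\mathrm{SL}_3(\mathbb{F}_q[t])$, viewed as a lattice in the locally compact group $G = \mathrm{SL}_3(\Bbbk_\infty) = \mathrm{SL}_3(\mathbb{F}_q(\!(t^{-1})\!))$; note this is a rank $\geq 2$ group (the $\Bbbk_\infty$-rank of $\mathrm{SL}_3$ is $2$), so the hypotheses of \cite{LMR} are met. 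Thus there are constants $L \geq 1$, $C' \geq 0$ with
\[
L^{-1} d_S(e,\gamma) - C' \leq d_G(e,\gamma) \leq L\, d_S(e,\gamma) + C'
\]
for all $\gamma \in \mathrm{SL}_3(\mathbb{F}_q[t])$, where $d_G$ is a word metric on $G$ associated to a compact generating set, or equivalently (up to $\simeq$-equivalence) the metric coming from the action of $G$ on its Bruhat--Tits building.

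The remaining task is purely a matter of translating $d_G(e,\gamma)$ into the quantity $\log \left\| \gamma \right\|$. First I would fix the maximal compact subgroup $K = \mathrm{SL}_3(\mathbb{F}_q[\![t^{-1}]\!])$ of $G$ and use the Cartan decomposition $G = K A^+ K$, where $A^+$ is the set of diagonal matrices $\mathrm{diag}(t^{a_1},t^{a_2},t^{a_3})$ with $a_1 \geq a_2 \geq a_3$ and $a_1+a_2+a_3 = 0$ (these represent the vertices of a Weyl chamber in the building). The word distance $d_G(e, kak')$ is then comparable to the combinatorial distance from the base vertex to the vertex $a$, which is $\simeq a_1 - a_3 = \max_i a_i - \min_i a_i$. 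On the other hand, since $K$ acts by isometries on $(\Bbbk_\infty)^3$ preserving the sup-norm, and since the operator norm of $a \in A^+$ on matrices is $q^{a_1}$ while that of $a^{-1}$ is $q^{-a_3}$, one checks that for $\gamma = kak'$ one has $\log\left\|\gamma\right\| = a_1 \cdot \log q$ and, using $\gamma \in \mathrm{SL}_3$ so $\left\|\gamma^{-1}\right\|$ is also controlled, that $a_1 - a_3$ is comparable to $a_1$ (because $a_3 \leq 0 \leq a_1$ and $|a_3| \leq 2 a_1$). Hence $d_G(e,\gamma) \simeq a_1 \simeq \log\left\|\gamma\right\|$, and combining with the LMR quasi-isometric embedding and absorbing the additive constants into the ``$1+$'' in \eqref{eq1} gives the claim.

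I expect the main obstacle to be the careful identification of the setting of \cite{LMR} with the present one: one must be sure that $\mathrm{SL}_3(\mathbb{F}_q[t])$ really is an irreducible lattice in a higher-rank group (rather than in a product involving a rank-one factor, which would make the theorem inapplicable), and that the finitely generated structure is correct. Since $\mathrm{SL}_3(\mathbb{F}_q[t])$ is indeed finitely generated by Corollary \ref{corMCD} and is a cocompact\,--\,no, non-cocompact\,--\,lattice in $\mathrm{SL}_3(\Bbbk_\infty)$, which has $\Bbbk_\infty$-rank $2$, the hypotheses hold; this is implicitly where the restriction $n \geq 3$ is used (for $n=2$ the ambient group $\mathrm{SL}_2(\Bbbk_\infty)$ has rank $1$, and indeed $\mathrm{SL}_2(\mathbb{F}_q[t])$ is not even finitely generated). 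The comparison between the building metric and $\log\|\gamma\|$ via the Cartan decomposition is then a routine, if slightly tedious, computation with valuations of elementary divisors, entirely parallel to the classical archimedean estimate $d(e,\gamma) \simeq \log\|\gamma\|$ for $\mathrm{SL}_n(\mathbb{Z}) \hookrightarrow \mathrm{SL}_n(\mathbb{R})$.
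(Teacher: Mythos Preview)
Your approach is correct and is precisely the one the paper uses: the proposition is stated without proof, with the preceding paragraph simply attributing it to the Lubotzky--Mozes--Raghunathan theorem on quasi-isometric embedding of higher-rank lattices. Your Cartan-decomposition computation translating $d_G(e,\gamma)$ into $\log\|\gamma\|$ is the standard way to unpack this citation, and in fact supplies more detail than the paper does.
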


Roughly speaking, Proposition \ref{propLMR} states that the size of \mbox{$\gamma \in \mathrm{SL}_3(\mathbb{F}_q[t])$} is the maximum of the degrees of the entries of $\gamma$.

\subsection{Quasi-isometrically embedded lamplighters}

While connecting points in the Cayley graph of $\mathrm{SL}_3(\mathbb{F}_q[t])$, we will take advantage of embedded subgroups with nice geometric properties with respect to our problem, i.e. with a linear divergence. For $i=1,2$, define
\[ \Lambda_i = \begin{pmatrix}
1 & \begin{array}{ll} 0 & 0 \end{array} \\
\begin{array}{l} \mathbb{F}_q[t] \\ \mathbb{F}_q[t] \end{array} &  A_i^{\mathbb{Z}}
\end{pmatrix}. \]
Since $\mathbb{F}_q[t]$ is a cocompact lattice in $\Bbbk_{\infty}$, the group $\Lambda_i$ is a cocompact lattice in \[ G_i = \begin{pmatrix}
1 & \begin{array}{ll} 0 & 0 \end{array} \\
\begin{array}{l} \Bbbk_{\infty} \\ \Bbbk_{\infty} \end{array} &  A_i^{\mathbb{Z}}
\end{pmatrix}. \]
Now since $A_i$ is diagonalizable in $\mathrm{SL}_2(\Bbbk_{\infty})$, it follows that $G_i$ is conjugated in $\mathrm{SL}_3(\Bbbk_{\infty})$ to the group \[ \begin{pmatrix} 1 & 0 & 0 \\ \Bbbk_{\infty} & \lambda_+^n & 0 \\ \Bbbk_{\infty} & 0 & \lambda_+^{-n} \end{pmatrix}, \]
with $\left| \lambda_+ \right| > 1$. So the groups $\Lambda_i$ both are quasi-isometric to $\Bbbk_{\infty}^2 \rtimes \mathbb{Z}$, where the action of $\mathbb{Z}$ is the multiplication by $(\lambda_+,\lambda_+^{-1})$, and in particular we obtain

\begin{prop}
For $i=1,2$, all the asymptotic cones of $\Lambda_i$ are homeomorphic to the hypersurface of equation $b(x)+b(y)=0$ in the product of two copies of the universal real tree $\mathbb{T}$ with continuum branching everywhere, where $b$ is a Busemann function on $\mathbb{T}$.
\end{prop}

\begin{proof}
See Section 9 of \cite{Co}.
\end{proof}

Since this space has no cut-points, Proposition \ref{PropDMS} tells us that the groups $\Lambda_i$ have a linear divergence. In order to use this property inside $\mathrm{SL}_3(\mathbb{F}_q[t])$, we need to ensure that these groups are quasi-isometrically embedded. This follows from the fact that the groups $\Lambda_i$ are quasi-isometric to $\Bbbk_{\infty}^2 \rtimes \mathbb{Z}$ and from the following standard lemma, which gives an estimate of the word metric in the latter group.

\begin{lem}
Let $(\mathbb{K}_i)_{i=1..m}$ be a family of local fields, each one being endowed with a multiplicative norm $| \cdot |_i$. For $i=1\ldots m$, let $\lambda_i \in \mathbb{K}_i$ be an element of norm different from 1. Let $G = (\bigoplus \mathbb{K}_i) \rtimes \mathbb{Z}$, where the diagonal action of $\mathbb{Z}$ is by multiplication by $\lambda_i$ on $\mathbb{K}_i$. Then $G$ is compactly generated and for any compact generating set $S$ of $G$, there exists a constant $C$ such that for any $(x,n) = (x_1, \ldots, x_m,n) \in G$, \[ C^{-1} \left( \log \left(1+ \max_i |x_i|_i \right) + |n| \right) \leq |(x,n)|_S \leq C \left( \log \left(1+ \max_i |x_i|_i \right) + |n| \right). \]
\end{lem}

\begin{proof}
It is not hard to see that it is enough to prove the result for only one local field $\mathbb{K}$, and that without loss of generality we can assume that the element $\lambda$ defining the action of $\mathbb{Z}$ has norm strictly smaller than $1$. In this situation $G$ is generated by the compact set $S = \mathbb{K}_0 \cup t$, where $\mathbb{K}_0$ denotes the set of elements of $\mathbb{K}$ of norm at most $1$, and $t=(0,1)$. Since different compact generating sets yield bi-Lipschitz equivalent word metrics, it is enough to prove the result for this generating set. 

Let us prove the upper bound first. If $(x,n) \in G$ and if we denote by \[ n_0 = \max \left( \left\lfloor  - \log |x| / \log |\lambda| \right\rfloor + 1, 0 \right), \] then the reader can check that $\lambda^{n_0}x \in \mathbb{K}_0$. Therefore there exists $x_0 \in \mathbb{K}_0$ such that the word $t^{-n_0}x_0t^{n_0}t^n$ represents the element $(x,n)$ of $G$, which therefore has length at most $2n_0+1+|n| \leq c ( \log (1+|x|) + |n|)$ for some constant $c$.

To prove the lower bound, let us consider a word $t^{n_1}x_1 \ldots t^{n_k}x_k$ of length at most $\ell$ representing an element $(x,n)$ of $G$, where $x_i \in \mathbb{K}_0$ and $k, \sum |n_i| \leq \ell$. Then $(x,n)$ is also represented by the word \[ t^{n_1}x_1t^{-n_1} t^{n_1+n_2}x_2t^{-n_1-n_2} \ldots t^{n_1 + \ldots + n_k}x_k t^{- n_1 - \ldots - n_k} t^{n_1 + \ldots + n_k}, \] and therefore the equality \[ \sum_{i=1}^k \lambda^{n_1+\ldots+n_i}x_i = x \] holds in $\mathbb{K}$, and $n = n_1 + \ldots + n_k$. Consequently \[ |x| \leq \sum_{i=1}^k |\lambda^{n_1+\ldots+n_i}x_i| \leq k \max_i |\lambda|^{n_1+\ldots+n_i} \leq \ell |\lambda|^{-\ell} \leq |\lambda|^{-c'\ell}\] for some constant $c'$. So now \[ \log (1+|x|) + |n| \leq \log (1+ |\lambda|^{-c'\ell}) + \sum |n_i| \leq c'' \ell, \] for some constant $c''$ and $\ell$ large enough, which completes the proof.
\end{proof}

\subsection{Construction of external trajectories}

The following lemma will be used repeatedly in this section.

\begin{lem} \label{tool}
Let $0 < \varepsilon < 1$ be fixed. For any $\alpha \in \mathrm{SL}_3(\mathbb{F}_q[t])$ having an $\varepsilon$-large entry in its third column and any \[ \theta = \begin{pmatrix} 1 & 0 & 0 \\ x & 1 & 0 \\ y & 0 & 1 \end{pmatrix} \in \mathrm{SL}_3(\mathbb{F}_q[t]),\]
$\alpha$ and $\alpha \theta$ are uniformly externally connected.
\end{lem}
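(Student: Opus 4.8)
The plan is to move from $\alpha$ to $\alpha\theta$ by passing through the embedded subgroup $\Gamma_i$, exploiting that $\Gamma_i$ is exponentially distorted so that the matrix $\theta$, which has size $\simeq \log\|\theta\|$ in $\mathrm{SL}_3$, can be expressed as a word of length $\simeq \log\log\|\theta\|$ in generators of $\Gamma_i$ (by Proposition \ref{SOL}, reading the inequality in the distorted direction). Thus the path $\eta$ will be the concatenation of the single multiplication steps corresponding to these short factors: $\alpha = \alpha_0, \alpha_1, \dots, \alpha_m = \alpha\theta$, where $\alpha_{k+1} = \alpha_k s_k$ with $s_k \in S_i \subset S$, and $m \simeq \log d_S(e,\theta) \preccurlyeq \log d_S(\gamma_1,\gamma_2)$. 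Condition ii) of a $(\delta_1,\delta_2,\delta_3)$-external trajectory is then immediate since the length $m$ is not merely linear but logarithmic in $d_S(\alpha,\alpha\theta)$.

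The substance of the argument is condition i): every intermediate matrix $\alpha_k$ must stay far from the origin, i.e. $\log\|\alpha_k\| \geq \delta_1 \log\|\{\gamma_1,\gamma_2\}\|$ up to an additive constant, using Proposition \ref{propLMR} to translate between word length and $\log\|\cdot\|$. Here is where the hypothesis that $\alpha$ has an $\varepsilon$-large entry in its \emph{third} column is used: multiplying $\alpha$ on the right by elements of $\Gamma_i$ only performs column operations \emph{among the first two columns} (adding $\mathbb{F}_q[t]$-multiples of columns and acting by $A_i^{\mathbb{Z}}$ on the first two columns), and hence \textbf{leaves the third column of every $\alpha_k$ unchanged}. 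Therefore $\|\alpha_k\| \geq \|\text{third column of }\alpha\| \geq |\text{the }\varepsilon\text{-large entry}| \geq \|\alpha\|^{\varepsilon}$ for all $k$. Since $\log\|\alpha\| \simeq d_S(e,\alpha) \geq d_S(e,\{\gamma_1,\gamma_2\}) - d_S(\alpha,\alpha\theta)$ — wait, more carefully: $\{\gamma_1,\gamma_2\} = \{\alpha,\alpha\theta\}$, so $d_S(e,\{\gamma_1,\gamma_2\}) \leq d_S(e,\alpha)$, giving $\log\|\alpha_k\| \geq \varepsilon \log\|\alpha\| \geq \varepsilon C^{-1} d_S(e,\alpha) - \text{const} \geq \varepsilon C^{-1} d_S(e,\{\gamma_1,\gamma_2\}) - \text{const}$, which is exactly condition i) with $\delta_1 = \varepsilon C^{-2}$ and a suitable $\delta_2$ (applying Proposition \ref{propLMR} once more to pass back from $\log\|\alpha_k\|$ to $d_S(e,\alpha_k)$).

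The main obstacle — really the only place needing care — is making sure that the word in $\Gamma_i$ representing $\theta$ can genuinely be taken of logarithmic length \emph{and} that all of its prefixes $\alpha_0,\dots,\alpha_m$ are legitimate vertices whose distance to $e$ we control; this is precisely the content of Proposition \ref{SOL} together with the observation that the partial products $s_0 s_1 \cdots s_k$ lie in $\Gamma_i$, so that each $\alpha_k = \alpha \cdot (s_0\cdots s_{k-1})$ retains the unaltered third column of $\alpha$. One should also choose $i \in \{1,2\}$ so that $\theta$ indeed lies in $\Gamma_i$; but the two groups $\Gamma_1,\Gamma_2$ have the same unipotent lower-left block $\begin{pmatrix}\mathbb{F}_q[t]\\ \mathbb{F}_q[t]\end{pmatrix}$, and any $\theta$ of the stated form (with zero in the $(2,3)$ and $(1,2),(1,3)$ slots) sits in both, so either works. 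A final cosmetic point is that $S_i$ need not be contained in $S$ literally, but each generator of $S_i$ has bounded $d_S$-length, so the length and the tube radius estimates absorb this into the constants. With these ingredients assembled, $\alpha$ and $\alpha\theta$ are $(\delta_1,\delta_2,\delta_3)$-externally connected for constants depending only on $\varepsilon$, $C$, and the chosen generating sets, proving the lemma.
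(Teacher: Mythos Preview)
There is a genuine gap in your argument for condition~i). You claim that right-multiplying $\alpha$ by generators of $\Gamma_i$ ``leaves the third column of every $\alpha_k$ unchanged,'' but this is false. Recall that in $\Gamma_i$ the $A_i$-block sits in the \emph{lower-right} $2\times 2$ corner, not the upper-left: an element of $\Gamma_i$ has the shape
\[
\begin{pmatrix} 1 & 0 & 0 \\ * & a & b \\ * & c & d \end{pmatrix},\qquad
\begin{pmatrix} a & b \\ c & d \end{pmatrix}\in A_i^{\mathbb Z}.
\]
Right multiplication by such a matrix sends the column pair $(c_2,c_3)$ of $\alpha$ to $(a c_2+c c_3,\; b c_2+d c_3)$; equivalently, the row vector $(\alpha_{j,2},\alpha_{j,3})$ is replaced by $(\alpha_{j,2},\alpha_{j,3})A_i$. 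Since $A_i$ has an eigenvalue of norm $q^{-1}$, iterated application of $A_i$ along the word can contract the third-column entry arbitrarily, and your lower bound $\log\|\alpha_k\|\ge \varepsilon\log\|\alpha\|$ does not follow.

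This is exactly the point where the paper's proof does real work: it introduces the non-contracting cones $\mathrm{NC}_{c_0}(A_i)\subset\Bbbk_\infty^2$ and uses that $\mathrm{NC}_{c_0}(A_1)\cup\mathrm{NC}_{c_0}(A_2)=\Bbbk_\infty^2$ for small $c_0$ to \emph{choose} $i\in\{1,2\}$ so that $(\alpha_{j,2},\alpha_{j,3})\in\mathrm{NC}_{c_0}(A_i)$. With this choice, every partial product in the word (which only touches columns~2,~3 through powers of $A_i$) keeps $\|(\alpha_{j,2},\alpha_{j,3})A_i^m\|\ge c_0\|(\alpha_{j,2},\alpha_{j,3})\|$, and the $\varepsilon$-largeness of $\alpha_{j,3}$ then gives the desired lower bound on $d_S(e,\alpha_k)$. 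Your remark that ``either $i$ works'' because $\theta$ lies in both $\Gamma_1$ and $\Gamma_2$ misses the purpose of having two conjugate matrices: the choice of $i$ depends on $\alpha$, not on $\theta$.

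A smaller point: your length estimate is also off. Proposition~\ref{SOL} gives $d_{S_i}(e,\theta)\asymp 1+\log\|\theta\|$, which by Proposition~\ref{propLMR} is comparable to $d_S(e,\theta)=d_S(\alpha,\alpha\theta)$, not to its logarithm. The trajectory is of linear length in $d_S(\alpha,\alpha\theta)$, which is all that is needed for condition~ii); there is no extra logarithmic gain.
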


\begin{proof}
Let $\alpha \in \mathrm{SL}_3(\mathbb{F}_q[t])$ having an $\varepsilon$-large entry $\alpha_{j,3}$ in its third column. First note that we have a control of the size of $\alpha \theta$ in the sense that $\alpha \theta$ is bounded away from the identity. Indeed, since the third column of $\alpha \theta$ is the third column of $\alpha$,
\[ \log \left\|\alpha \theta\right\| \geq \log \left|\alpha_{j,3} \right| \geq \varepsilon \log \left\|\alpha \right\|. \]
Now according to (\ref{eq1}), \[ \begin{aligned} d_S(e,\alpha \theta) & \geq C^{-1}(1+\log \left\|\alpha \theta\right\|) \\ & \geq C^{-1}(1+\varepsilon \log \left\|\alpha \right\|) \\ & \geq C^{-1}(1+\varepsilon (C^{-1}d_S(e,\alpha) - 1)) \\ & = \varepsilon C^{-2} d_S(e,\alpha) + C^{-1}(1-\varepsilon). \end{aligned} \]

Since $\Lambda_i$ is quasi-isometrically embedded in $\mathrm{SL}_3(\mathbb{F}_q[t])$, for every \[ \theta = \begin{pmatrix} 1 & 0 & 0 \\ x & 1 & 0 \\ y & 0 & 1 \end{pmatrix} \in \mathrm{SL}_3(\mathbb{F}_q[t])\] there exists a short word $s_1^{(i)} \ldots s_n^{(i)}$ representing $\theta$, where all the $s_k^{(i)}$ belong to \[ \left\{
\begin{pmatrix}
1 & \begin{array}{ll} 0 & 0 \end{array} \\
\begin{array}{l} 0 \\ 0 \end{array} &  A_i
\end{pmatrix},
\begin{pmatrix} 1 & 0 & 0 \\ \alpha & 1 & 0 \\ 0 & 0 & 1 \end{pmatrix},
\begin{pmatrix} 1 & 0 & 0 \\ 0 & 1 & 0 \\ \beta & 0 & 1 \end{pmatrix}
\right\},
\]
with $\alpha, \beta \in \mathbb{F}_q^*$. It gives us two short trajectories $\eta_1, \eta_2$ from $\alpha$ to $\alpha \theta$. To conclude that $\alpha$ and $\alpha \theta$ are uniformly externally connected, it is enough to prove that one of these two trajectories does not get too close to the identity.

Following \cite{DMS}, for $0<c<1$ and $i=1,2$, we define the non-contracting cone of the matrix $A_i$ in $\Bbbk_{\infty}^2$, \[ \textnormal{NC}_c(A_i) = \left\{ v \in \Bbbk_{\infty}^2 \, : \, \left\|v A \right\| > c \, \left\| v \right\| \, \, \forall A \in \left\langle A_i\right\rangle \right\} \cup \left\{0\right\}. \] Since $A_1$ and $A_2$ have distinct eigen-directions, it is easily checked that we can choose $c_0 > 0$ small enough such that \[ \textnormal{NC}_{c_0}(A_1) \cup \textnormal{NC}_{c_0}(A_2) = \Bbbk_{\infty}^2,\] and therefore we can find $i$ such that $(\alpha_{j,2},\alpha_{j,3}) \in \textnormal{NC}_{c_0}(A_i)$, where $\alpha_{j,3}$ still denotes an $\varepsilon$-large entry of $\alpha$.

Note that in each trajectory $\eta_i$, only the elements $s_k^{(i)}$ belonging to $S_2$ can affect the second or the third column. It follows from this observation, and from the choice of $i$ such that $(\alpha_{j,2},\alpha_{j,3}) \in \textnormal{NC}_{c_0}(A_i)$, that the same computation we made at the beginning of the proof yields that any point in the trajectory $\eta_i$ is at distance at least \[ \varepsilon C^{-2} d_S(e,\alpha) + C^{-1}(1-\varepsilon + \log c_0) \] from the identity, which concludes the proof.
\end{proof}

As in the proof in \cite{DMS} for the case of $\mathrm{SL}_3(\mathbb{Z})$, the strategy for uniformly externally connecting two given points will consist in first moving each of them to another point of the same size, lying in a particular subgroup. This is achieved by the following lemma.

\begin{lem} \label{lem1}
There exist $c_1, c_2 > 0$ such that for any $\alpha \in \mathrm{SL}_3(\mathbb{F}_q[t])$, there is an element \[ \alpha' = \begin{pmatrix} 1 & 0 & x \\ 0 & 1 & y \\ 0 & 0 & 1 \end{pmatrix} \in \mathrm{SL}_3(\mathbb{F}_q[t]) \] satisfying \[ c_1 \leq \frac{1+d_{S}(e,\alpha)}{1+d_{S}(e,\alpha')} \leq c_2, \] and such that $\alpha$ and $\alpha'$ are uniformly externally connected. 
\end{lem}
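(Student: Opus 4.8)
The plan is to use column operations to move an arbitrary $\alpha$ to a matrix $\alpha'$ of the indicated unipotent form, while keeping track of the size and ensuring that the intermediate path stays far from the identity. First I would pick an $\varepsilon$-large entry of $\alpha$ and use a monomial matrix from $S_1$ to permute rows and columns so that this $\varepsilon$-large entry sits in the third column; this costs a bounded number of generators and changes the norm by at most a bounded factor, so it does not affect the ratio $(1+d_S(e,\alpha))/(1+d_S(e,\alpha'))$ up to constants, and by Proposition \ref{propLMR} this permuted matrix is uniformly externally connected to $\alpha$ (indeed its distance to $e$ only changes by an additive constant). So we may assume from the start that $\alpha$ has an $\varepsilon$-large entry in its third column.

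Next I would perform column operations on the first two columns using the third column to clear them out: right-multiplying $\alpha$ by matrices of the form $e_{3,1}(P)$ and $e_{3,2}(P)$ adds multiples of the third column to the first and second, and since the third column contains an entry of norm $\geq \|\alpha\|^{\varepsilon}$, a bounded number of such operations (each a $\theta$-type matrix as in Lemma \ref{tool}, since $e_{3,1}$ and $e_{3,2}$ are exactly the lower-triangular unipotents appearing there) brings the first two columns down to have all entries of norm $O(\|\alpha\|)$ but arranged so that, after a further monomial rearrangement and a second round of reductions using the (now dominant) entry, the matrix becomes of the form with $(1,3)$ and $(2,3)$ entries arbitrary and the top-left $2\times 2$ block the identity. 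The point is that each single step is a right multiplication by a matrix $\theta = \begin{pmatrix} 1 & 0 & 0 \\ x & 1 & 0 \\ y & 0 & 1 \end{pmatrix}$ applied to a matrix whose current third column is still $\varepsilon$-large (the third column is never modified by these operations), so Lemma \ref{tool} applies at every step and, composing the bounded number of uniformly external trajectories, $\alpha$ is uniformly externally connected to $\alpha'$. The size estimate follows because at every step the norm stays between $\|\alpha\|^{\varepsilon}$ and a bounded power of $\|\alpha\|$, which via Proposition \ref{propLMR} translates into the two-sided bound $c_1 \leq (1+d_S(e,\alpha))/(1+d_S(e,\alpha')) \leq c_2$.

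The main obstacle I anticipate is the bookkeeping of column operations: reducing a general $\mathrm{SL}_3$ matrix to the prescribed unipotent form while guaranteeing that at \emph{every} intermediate stage the working third column remains $\varepsilon$-large, so that Lemma \ref{tool} is genuinely applicable. One has to be careful that clearing the first two columns does not accidentally shrink the third column's largest entry below the threshold $\|\cdot\|^{\varepsilon}$ of the current matrix — but since the third column is untouched by the $e_{3,1}, e_{3,2}$ operations and the overall norm only decreases through these reductions (or increases by a bounded factor), the ratio $\log|\alpha_{j,3}| / \log\|\cdot\|$ stays bounded below, perhaps after replacing $\varepsilon$ by a smaller fixed constant. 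The remaining verifications — that finitely many generators suffice and that concatenating finitely many uniformly external trajectories yields a uniformly external trajectory, with the constants $\delta_1, \delta_2, \delta_3$ combining in the obvious way — are routine.
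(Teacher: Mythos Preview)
Your sketch has a genuine gap at the heart of the reduction. Right-multiplying by $e_{3,1}(P)$ replaces column~1 by $\text{col}_1 + P\cdot\text{col}_3$, and similarly for $e_{3,2}(P)$. To ``clear out'' an entry of column~1 this way --- say, to kill the bottom-left entry $a$ using the bottom-right entry $c$ --- you would need $a + Pc = 0$ for some polynomial $P$, i.e.\ $c \mid a$, which generically fails. So a ``bounded number of such operations'' cannot reduce the first two columns to $e_1, e_2$; the reduction you describe simply does not exist over $\mathbb{F}_q[t]$ without further ideas. (Also, $e_{3,2}(P)$ is not of the form $\theta$ in Lemma~\ref{tool}; it only becomes one after conjugating by a monomial matrix, which the paper does explicitly.)

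What the paper does instead is a B\'ezout argument on the \emph{bottom row}. Starting from bottom row $(a,b,c)$ with $c$ an $\varepsilon$-large entry, one first arranges (Claim~1) that two of the entries, say $a'$ and $b'$, are coprime --- this uses a short arithmetic trick, adding a carefully chosen multiple $mc$ to $b$. Then (Claim~2) B\'ezout coefficients $u,v$ with $a'u+b''v=1$ are used to produce a $1$ in the bottom row, and that $1$ is then used to clear the rest of the row to $(0,1,1)$ and finally $(0,0,1)$. Each of these moves is a single right-multiplication by a matrix of $\theta$-type (possibly conjugated by a monomial), so Lemma~\ref{tool} applies; but the point is that the \emph{choice} of $\theta$ is dictated by the coprimality/B\'ezout step, not by naive column reduction. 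After the bottom row is $(0,0,1)$, the top-left $2\times 2$ block $B\in\mathrm{SL}_2(\mathbb{F}_q[t])$ must still be killed (Claim~4): the paper writes $B^{-1}$ as a short word in the generators of the embedded $\mathrm{SL}_2$ and checks, possibly after a preliminary multiplication by $e_{1,3}(1)$ and a power of $A_1$, that every prefix keeps an $\varepsilon$-large third column. Your sketch does not address this second stage at all.

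Finally, your control on the norm is too optimistic: adding $P\cdot\text{col}_3$ to $\text{col}_1$ with $P$ of degree comparable to $\deg c$ (as happens, e.g., when manufacturing an $\varepsilon$-large $b''$) can increase $\|\cdot\|$ substantially, not merely by a bounded factor. The paper tracks this step by step, establishing at each stage $i$ that $c_1^{(i)} \le (1+d_S(e,\alpha_i))/(1+d_S(e,\alpha_{i+1})) \le c_2^{(i)}$.
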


Then using the fact that the subgroup
\[ \Lambda_1' = \begin{pmatrix}
 A_1 & \begin{array}{l} x \\ y \end{array} \\
 \begin{array}{cc} 0 & 0 \end{array} & 1
\end{pmatrix}\]
(which is isomorphic to $\Lambda_1$) has a linear divergence and is quasi-isometrically embedded inside $\mathrm{SL}_3(\mathbb{F}_q[t])$, we obtain that any two points in $\mathrm{SL}_3(\mathbb{F}_q[t])$ are uniformly externally connected, i.e. $\mathrm{SL}_3(\mathbb{F}_q[t])$ has a linear divergence.

\begin{proof}[Proof of Lemma \ref{lem1}]
We are going to proceed in a finite number of steps to construct a path in the Cayley graph connecting $\alpha$ to a suitable $\alpha'$. In each step, we can check that $\alpha_{i+1}$ and $\alpha_i$ satisfy \[ c_1^{(i)} \leq \frac{1+d_{S}(e,\alpha_i)}{1+d_{S}(e,\alpha_{i+1})} \leq c_2^{(i)}\] for some constants $c_1^{(i)},c_2^{(i)} > 0$ which do not depend on $\alpha$. The estimate on the size of $\alpha'$ follows from this sequence of inequalities. 

Let $0 < \varepsilon < 1$ and let 
\[\alpha =
\begin{pmatrix}
 * & * & * \\
 * & * & * \\
 a & b & c
\end{pmatrix} \in \mathrm{SL}_3(\mathbb{F}_q[t]).\]
Swapping columns if necessary, which is possible thanks to $S_1$, we can assume that $\alpha$ has an $\varepsilon$-large entry $\alpha_{i,3}$ in the last column.

Now we claim that without loss of generality, we can also assume that the lower right entry $c$ is $\varepsilon$-large. Indeed, we can first assume, at the price of exchanging the first two columns, that $b \neq 0$ (otherwise if $a=b=0$ then we can directly move to Claim 4 of this proof). Now multiplying $\alpha$ on the right by \[ \theta = \begin{pmatrix} 1 & 0 & 0 \\ t^{\deg \alpha_{i,3}} & 1 & 0 \\ 0 & 0 & 1 \end{pmatrix}\] has the effect of changing $a$ into $a+  t^{\deg \alpha_{i,3}}b$, which has degree at least $\deg \alpha_{i,3}$ because $b \neq 0$. But according to Lemma \ref{tool}, $\alpha$ and $\alpha \theta$ are uniformly externally connected. Therefore we can suppose that the lower left entry is $\varepsilon$-large, and finally we just have to exchange the first and third columns to obtain an $\varepsilon$-large lower right entry.

\begin{claim1}
$\alpha$ is uniformly externally connected to \[ \alpha_2 = \begin{pmatrix}
 * & * & * \\
 * & * & * \\
 a' & b' & c
\end{pmatrix},
\]
where $a',b'$ verify $\gcd(a',b')=1$.
\end{claim1}

\begin{proof}
Multiplying $\alpha$ by $e_{1,3}(1)$ if necessary, which has the effect of changing $a$ into $a' = a+c$, we can assume that $a' \neq 0$. Let $p_1, \ldots, p_d$ be the distinct prime divisors of $a'$. If we let $m$ be the product of all the $p_i$'s dividing neither $b$ nor $c$, then it is easy to check that $a'$ and $b'=b+mc$ are relatively prime. Now using Lemma \ref{tool} and the identity
\[
\begin{pmatrix}
 1 & 0 & 0 \\
 0 & 1 & 0 \\
 0 & m & 1
\end{pmatrix}
=
\begin{pmatrix}
 0 & 1 & 0 \\
 -1 & 0 & 0 \\
 0 & 0 & 1
\end{pmatrix}
\begin{pmatrix}
 1 & 0 & 0 \\
 0 & 1 & 0 \\
 -m & 0 & 1
\end{pmatrix}
\begin{pmatrix}
 0 & -1 & 0 \\
 1 & 0 & 0 \\
 0 & 0 & 1
\end{pmatrix},
\]
we obtain that $\alpha$ is uniformly externally connected to \[ \begin{pmatrix}
 * & * & * \\
 * & * & * \\
 a' & b' & c
\end{pmatrix} = \alpha_2.
\]
\end{proof}

We now want to perform an external trajectory between $\alpha_2$ and \[ \alpha_3 = \begin{pmatrix}
 * & * & * \\
 * & * & * \\
 a' & b'' & c
\end{pmatrix},
\]
where the entry $b''$ is $\varepsilon$-large and satisfies $\gcd(a',b'')=1$. There is nothing to do if $\deg b' \geq \deg c$. Otherwise we get this external trajectory by setting $b'' = b' + t^{\deg c}a'$ and using Lemma \ref{tool} and the identity
\[
\begin{pmatrix}
 1 & t^{\deg c} & 0 \\
 0 & 1 & 0 \\
 0 & 0 & 1
\end{pmatrix}
=
\begin{pmatrix}
 0 & -1 & 0 \\
 1 & 0 & 0 \\
 0 & 0 & 1
\end{pmatrix}
\begin{pmatrix}
 1 & 0 & 0 \\
 -t^{\deg c} & 1 & 0 \\
 0 & 0 & 1
\end{pmatrix}
\begin{pmatrix}
 0 & 1 & 0 \\
 -1 & 0 & 0 \\
 0 & 0 & 1
\end{pmatrix}.
\]

\begin{claim2}
$\alpha_3$ is uniformly externally connected to \[ \alpha_4 = \begin{pmatrix}
 * & * & * \\
 * & * & * \\
 a' & 1 & b''
\end{pmatrix}.
\]
\end{claim2}

\begin{proof}
First note that we can go in one step from $\alpha_3$ to \[ \alpha_3' = \alpha_3 \begin{pmatrix}
 1 & 0 & 0 \\
 0 & 0 & 1 \\
 0 & -1 & 0
\end{pmatrix}
=
\begin{pmatrix}
 * & * & * \\
 * & * & * \\
 a' & -c & b''
\end{pmatrix}.
\]
Let $u,v$ be Bézout coefficients of small degree for the pair $(a',b'')$, that is, $a'u+b''v=1$. Using Lemma \ref{tool} one more time and the two identities already used above, we get that $\alpha_3'$ is uniformly externally connected to
\[ \alpha_3' \begin{pmatrix}
 1 & (c+1)u & 0 \\
 0 & 1 & 0 \\
 0 & 0 & 1
\end{pmatrix}
\begin{pmatrix}
 1 & 0 & 0 \\
 0 & 1 & 0 \\
 0 & (c+1)v & 1
\end{pmatrix}
= \alpha_3'
\begin{pmatrix}
 1 & (c+1)u & 0 \\
 0 & 1 & 0 \\
 0 & (c+1)v & 1
\end{pmatrix}
=
\begin{pmatrix}
 * & * & * \\
 * & * & * \\
 a' & 1 & b''
\end{pmatrix}.
\]
\end{proof}

\begin{claim3}
$\alpha_4$ is uniformly externally connected to \[ \alpha_5 = \begin{pmatrix}
 * & * & * \\
 * & * & * \\
 0 & 1 & 1
\end{pmatrix},
\]
where the third column of $\alpha_5$ remains $\varepsilon$-large.
\end{claim3}

\begin{proof}
It directly follows from Lemma \ref{tool} that $\alpha_4$ is uniformly externally connected to \[ \alpha_4' = \alpha_4 \begin{pmatrix}
 1 & 0 & 0 \\
 -a' & 1 & 0 \\
 0 & 0 & 1
\end{pmatrix}
=
\begin{pmatrix}
 * & * & * \\
 * & * & * \\
 0 & 1 & b''
\end{pmatrix}.
\]
Now we can find a polynomial $P$ of small degree such that if we first externally connect $\alpha_4'$ to \[\alpha_4'' = \alpha_4' \begin{pmatrix}
 1 & P & 0 \\
 0 & 1 & 0 \\
 0 & 0 & 1
\end{pmatrix},
\]
then the third column of \[\alpha_5 = \alpha_4'' \begin{pmatrix}
 1 & 0 & 0 \\
 0 & 1 & 1-b'' \\
 0 & 0 & 1
\end{pmatrix}
=
\begin{pmatrix}
 * & * & * \\
 * & * & * \\
 0 & 1 & 1
\end{pmatrix}
\]
remains $\varepsilon$-large.
\end{proof}

Now the vertex corresponding to $\alpha_5$ in the Cayley graph is adjacent to \[ \alpha_6 = \alpha_5 e_{2,3}(-1) = \begin{pmatrix}
 B & \begin{array}{l} * \\ * \end{array} \\
 \begin{array}{cc} 0 & 0 \end{array} & 1
\end{pmatrix},
\]
where $B \in \mathrm{SL}_2(\mathbb{F}_q[t])$.

\begin{claim4}
$\alpha_6$ is uniformly externally connected to \[ \alpha_7 = \begin{pmatrix}
 1 & 0 & x \\
 0 & 1 & y \\
 0 & 0 & 1
\end{pmatrix},
\]
which concludes the proof of Lemma \ref{lem1}.
\end{claim4}

\begin{proof}
Note that since the matrix $B^{-1}$ has determinant $1$, the entries of its first row are relatively prime. It follows from the Euclidean algorithm applied to these entries that $B^{-1}$ is a product of matrices \[ B^{-1} = \begin{pmatrix} 1 & 0 \\ q_1 & 1 \end{pmatrix} \begin{pmatrix} 0 & 1 \\ -1 & 0 \end{pmatrix}^{\pm 1} \begin{pmatrix} 1 & 0 \\ q_2 & 1 \end{pmatrix} \begin{pmatrix} 0 &  1 \\ - 1 & 0 \end{pmatrix}^{\pm 1} \cdots \begin{pmatrix} 1 & 0 \\ q_k & 1 \end{pmatrix}, \] where the polynomials $q_i$ are the quotients appearing when performing the Euclidean algorithm. In particular the quantity $\sum \deg q_i$ is bounded by the maximum of the degrees of the entries of $B^{-1}$. 

Now let us multiply $\alpha_6$ on the right by \[ \begin{pmatrix} 1 & 0 & 0 \\ q_1 & 1 & 0 \\ 0 & 0 & 1 \end{pmatrix} \begin{pmatrix} 0 & 1 & 0 \\ -1 & 0 & 0 \\ 0 & 0 & 1 \end{pmatrix}^{\pm 1} \begin{pmatrix} 1 & 0 & 0 \\ q_2 & 1 & 0 \\ 0 & 0 & 1 \end{pmatrix} \begin{pmatrix} 0 & 1 & 0 \\ -1 & 0 & 0\\ 0 & 0 & 1 \end{pmatrix}^{\pm 1} \cdots \begin{pmatrix} 1 & 0 & 0 \\ q_k & 1 & 0 \\ 0 & 0 & 1 \end{pmatrix}. \] Each product by a monomial matrix consists in moving to an adjacent vertex in the Cayley graph. Now the fact that $\sum \deg q_i$ is well controlled, together with Lemma \ref{tool} applied for each product by a matrix \[ \begin{pmatrix} 1 & 0 & 0 \\ q_i & 1 & 0 \\ 0 & 0 & 1 \end{pmatrix}, \] yield that $\alpha_6$ is uniformly externally connected to \[ \alpha_6 \begin{pmatrix}
 B^{-1} & \begin{array}{l} 0 \\ 0 \end{array} \\
 \begin{array}{cc} 0 & 0 \end{array} & 1
\end{pmatrix} = \begin{pmatrix}
 1 & 0 & x \\
 0 & 1 & y \\
 0 & 0 & 1
\end{pmatrix} = \alpha_7. \]
\end{proof}

\end{proof}

\section{Adding valuations} \label{secadd}

In this part we prove that the growth rate of the divergence function of the group $\mathrm{SL}_n(\mathcal{O}_{\mathcal{S}})$ cannot increase while adding valuations to the set $\mathcal{S}$.

\subsection{Number theory}
We now recall some basic definitions about valuations on a global function field $\Bbbk$.

\begin{defi}
A discrete valuation on $\Bbbk$ is a non-trivial homomorphism $\nu: \Bbbk^* \rightarrow \mathbb{R}$ satisfying $\nu(x+y) \geq \min(\nu(x),\nu(y))$ for all $x,y \in \Bbbk^*$ with $x+y \neq0$. It is convenient to extend $\nu$ to a function defined on $\Bbbk$ by setting $\nu(0)=\infty$.
\end{defi}

\begin{ex}
Let $P \in \mathbb{F}_q[t]$ be an irreducible polynomial. Every non-zero element $x$ of $\mathbb{F}_q(t)$ can be written in a unique way $x = P^n(a/b)$, where $n \in \mathbb{Z}$ and $a,b \in \mathbb{F}_q[t]$ are not divisible by $P$. We define a valuation on $\Bbbk = \mathbb{F}_q(t)$ putting $\nu_{P}(x) = n$. If $P = t-a$ then $\nu_{P}(x)$ is the order of vanishing of the rational function $x$ at the point $a \in \mathbb{F}_q$.
\end{ex}

\begin{ex} \label{exval}
Besides the valuations $\nu_{P}$ defined above, we get another discrete valuation $\nu_{\infty}$ on $\mathbb{F}_q(t)$ by setting $\nu_{\infty}(a/b) = \deg b - \deg a$ for any two non-zero polynomials $a,b \in \mathbb{F}_q[t]$.
\end{ex}

\begin{rmq}
Every discrete valuation on $\Bbbk = \mathbb{F}_q(t)$ is equivalent to either $\nu_{\infty}$ or $\nu_{P}$ for some irreducible $P \in \mathbb{F}_q[t]$ \cite[Th.2 Chap.III.1]{Weil}, where two valuations $\nu_1,\nu_2$ are said to be equivalent if there exists $c>0$ such that $\nu_1(x)=c \, \nu_2(x)$ for all $x \in \Bbbk$.
\end{rmq}

Given a valuation $\nu$ on $\Bbbk$, we define the associated norm by the formula $|x|_{\nu} = q^{-\nu(x)}$ for all $x \in \Bbbk$. It is easily checked that we get a metric on $\Bbbk$ by setting $d_{\nu}(x,y) = |x-y|_{\nu}$. By the completion of $\Bbbk$ with respect to the valuation $\nu$ we mean the completion of the metric space $(\Bbbk,d_{\nu})$. Note that the field operations and the valuation $\nu$ on $\Bbbk$ extend to its completion.

\begin{ex}
The completion of $\Bbbk = \mathbb{F}_q(t)$ with respect to $\nu_{P}$ is the field $\mathbb{F}_q(\!(P)\!)$, and its completion with respect to $\nu_{\infty}$ is $\mathbb{F}_q(\!(t^{-1})\!)$.
\end{ex}

If $\mathcal{S}$ denotes a finite set of valuations on $\Bbbk$, we denote by $\mathcal{O}_{\mathcal{S}}$ the ring of $\mathcal{S}$-integer points of $\Bbbk$. Recall that $\mathcal{O}_{\mathcal{S}}$ is defined as the set of $x \in \Bbbk$ such that $x$ is $\nu$-integral for all valuations $\nu \notin \mathcal{S}$,
\[ \mathcal{O}_{\mathcal{S}} = \left\{x \in \Bbbk \, : \, |x|_{\nu} \leq 1 \; \text{for all valuations} \; \nu \notin S \right\}.\]
We have a natural diagonal embedding of $\Bbbk$ into \[ \Bbbk_{\mathcal{S}} = \prod_{\nu \in \mathcal{S}} \Bbbk_{\nu},\]
where $\Bbbk_{\nu}$ denotes the completion of $\Bbbk$ with respect to the valuation $\nu$. Note that $\mathcal{O}_{\mathcal{S}}$ has a discrete and cocompact image into $\Bbbk_{\mathcal{S}}$.

\begin{ex}
Let $\Bbbk = \mathbb{F}_q(t)$ and let $\mathcal{S} = \left\{ \nu_{\infty}\right\}$. Then $\mathcal{O}_{\mathcal{S}}$ is the polynomial ring $\mathbb{F}_q[t]$. It is a discrete cocompact subring of $\mathbb{F}_q(\!(t^{-1})\!)$.
\end{ex}

\begin{ex}
Let $\Bbbk = \mathbb{F}_q(t)$ and let $\nu_a$ denote the valuation associated to the polynomial $t-a$. If $\mathcal{S} = \left\{\nu_{\infty}, \nu_0, \nu_{-1} \right\}$ then $\mathcal{O}_{\mathcal{S}} = \mathbb{F}_q[t,t^{-1},(t+1)^{-1}]$. It is a discrete cocompact subring of the locally compact ring $\mathbb{F}_q(\!(t^{-1})\!) \times \mathbb{F}_q(\!(t)\!) \times \mathbb{F}_q(\!(t+1)\!)$.
\end{ex}

\subsection{Proof of the result}

From now and until the end of the paper, $\mathcal{S}$ will denote a finite set of $s \geq 2$ pairwise non-equivalent valuations on $\Bbbk$ containing $\nu_{\infty}$. We will prove that the divergence function of the finitely generated group $\mathrm{SL}_n(\mathcal{O}_{\mathcal{S}})$ is linear. As in Section \ref{secmain}, we write down the arguments only for $n=3$, the proof of the general case being a straightforward extension of the proof for this case.

We now choose a finite generating set of $\mathrm{SL}_3(\mathcal{O}_{\mathcal{S}})$. We choose similarly the sets $S_0,S_1,S_2$ defined at the beginning of Section \ref{secmain}. Recall that according to Dirichlet unit theorem, the group $\mathcal{O}_{\mathcal{S}}^\times$ of units of $\mathcal{O}_{\mathcal{S}}$ is the direct product of a free Abelian group of rank $s-1$, freely generated by $\lambda_1, \ldots, \lambda_{s-1}$, with a finite Abelian group generated by $\lambda_s, \ldots, \lambda_d$. We define $S_3$ as the following set of matrices \[ S_3 = \bigcup_{i=1}^d \left\{
\begin{pmatrix} \lambda_i & & \\ & \lambda_i^{-1} & \\ & & 1 \end{pmatrix},
\begin{pmatrix} \lambda_i & & \\ & 1 & \\ & & \lambda_i^{-1} \end{pmatrix}
\right\}.
\]
We now take $S = S_0 \cup S_1 \cup S_2 \cup S_3$ as finite generating set of $\mathrm{SL}_3(\mathcal{O}_{\mathcal{S}})$.

If $x \in \mathcal{O}_{\mathcal{S}}$ and $\gamma$ is a matrix with entries in $\mathcal{O}_{\mathcal{S}}$, we denote by \[ |x| = \max_{\nu \in \mathcal{S}} |x|_{\nu} \] and \[ \left\| \gamma \right\| = \max_{i,j} \left\{ | \gamma_{i,j} | \right\}. \]

The control of the size of a matrix with respect to the word metric associated to $S$ by the size of its entries, provided by \cite{LMR}, still holds in this setting:

\begin{prop}
There exists $C > 0$ such that for any $\gamma \in \mathrm{SL}_3(\mathcal{O}_{\mathcal{S}})$, \[ C^{-1}(1 + \log \left\| \gamma \right\|) \leq d_{S}(e, \gamma) \leq C(1 + \log \left\| \gamma \right\|). \]
\end{prop}

We define similarly the notion of $\varepsilon$-large entry of an element of $\mathrm{SL}_3(\mathcal{O}_{\mathcal{S}})$, and the notion of external trajectories between two elements in the Cayley graph of $\mathrm{SL}_3(\mathcal{O}_{\mathcal{S}})$ associated to $S$. As in Section \ref{secmain} for $\mathrm{SL}_3(\mathbb{F}_q[t])$, we prove that any two elements of $\mathrm{SL}_3(\mathcal{O}_{\mathcal{S}})$ are uniformly externally connected. Some points of the proof will be similar to what we did in the case of $\mathbb{F}_q[t]$, but the idea is that here we are in a more pleasant situation because diagonal matrices coming from units in $\mathcal{O}_{\mathcal{S}}$ provide some more place to move.

\begin{lem}
There exist $c_1', c_2' > 0$ such that any $\alpha \in \mathrm{SL}_3(\mathcal{O}_{\mathcal{S}})$ can be uniformly externally connected to an element \[ \alpha' \in \begin{pmatrix} \mathrm{SL}_2(\mathcal{O}_{\mathcal{S}}) & \begin{array}{l} * \\ * \end{array} \\ \begin{array}{cc} 0 & 0 \end{array} & 1 \end{pmatrix} \] satisfying \[ c_1' \leq \frac{1+d_{S}(e,\alpha)}{1+d_{S}(e,\alpha')} \leq c_2'.\] 
\end{lem}

\begin{proof}
As before, the control on the size of $\alpha'$ will come from the fact that we proceed in a finite number of steps to connect $\alpha$ to $\alpha'$, and that each intermediate point satisfies such a control. Let $\varepsilon > 0$ be fixed. Write \[\alpha =
\begin{pmatrix}
 * & * & * \\
 * & * & * \\
 a & b & c
\end{pmatrix}. \]

First note that multiplying if necessary by diagonal elements of $S_3$, which has the effect of multiplying the columns of $\alpha$, we can assume that the first two columns belong to $\mathbb{F}_q[t]$ and that the third column is $\varepsilon$-large. Now proceeding as in Claim 1 of the proof of Lemma \ref{lem1} and using an analogue of Lemma \ref{tool}, we obtain that $\alpha$ is uniformly externally connected to \[\alpha_2 =
\begin{pmatrix}
 * & * & * \\
 * & * & * \\
 a' & b' & c
\end{pmatrix}, \]
where $a'$ and $b'$ are coprime. Then swapping columns 2 and 3, multiplying if necessary by elements of $S_3$ to get an $\varepsilon$-large third column, and doing as in Claim 2 of Section \ref{secmain}, we get that $\alpha_2$ is uniformly externally connected to \[\alpha_3 =
\begin{pmatrix}
 * & * & * \\
 * & * & * \\
 a' & 1 & b''
\end{pmatrix}. \]
Now using Lemma \ref{tool} we uniformly externally connect $\alpha_3$ to \[\alpha_4 =
\begin{pmatrix}
 * & * & * \\
 * & * & * \\
 0 & 1 & b''
\end{pmatrix}. \]
To obtain the desired result it is now sufficient to make the first column $\varepsilon$-large thanks to $S_3$, and use one more time Lemma \ref{tool} to connect $\alpha_4$ to \[ \begin{pmatrix}
 * & * & * \\
 * & * & * \\
 0 & 1 & 0
\end{pmatrix}, \]
and finally exchange columns 2 and 3.
\end{proof}

To finish the proof we now have to show that any two \[ \gamma, \gamma' \in \begin{pmatrix} \mathrm{SL}_2(\mathcal{O}_{\mathcal{S}}) & \begin{array}{l} * \\ * \end{array} \\ \begin{array}{cc} 0 & 0 \end{array} & 1 \end{pmatrix} \] are uniformly externally connected. But this is straightforward because since $s\geq2$, $\mathrm{SL}_2(\mathcal{O}_{\mathcal{S}})$ is finitely generated and quasi-isometrically embedded in $\mathrm{SL}_3(\mathcal{O}_{\mathcal{S}})$, so we easily get (by first making their third column large thanks to $S_3$) that each of them is uniformly externally connected to an element of the form \[ \begin{pmatrix}
 1 & 0 & * \\
 0 & 1 & * \\
 0 & 0 & 1
\end{pmatrix}. \]
Now in order to see that any two elements of this form are uniformly externally connected, we can for example argue that they lie in the quasi-isometrically embedded subgroup \[ \begin{pmatrix}
 * & 0 & * \\
 0 & * & * \\
 0 & 0 & *
\end{pmatrix} \simeq (\mathcal{O}_{\mathcal{S}} \rtimes \mathcal{O}_{\mathcal{S}}^\times)^2, \] which, as a direct product of two infinite finitely generated groups, trivially has a linear divergence.

\nocite{*}
\bibliographystyle{amsalpha}
\bibliography{SLd}

\end{document}